\title{A short proof that Rezk's nerve is fully faithful}
\author{Fabian Hebestreit}
\address{Fakultät für Mathematik, Universität Bielefeld,  
%Universitätsstraße 25, 33501 
% Bielefeld, 
Germany}
\email{hebestreit@math.uni-bielefeld.de}
\author{Jan Steinebrunner}
\address{Gonville \& Caius College,
University of Cambridge,
UK}
\email{js2675@cam.ac.uk}
\date{\today}
\begin{document}

\begin{abstract}
    We give a simple proof that complete Segal animae are equivalent to categories.
\end{abstract}
\maketitle
\setcounter{tocdepth}{1}
\tableofcontents

\section{Introduction}

Let $[-] \colon \simp \rightarrow \Cat$ denote the inclusion of non-empty finite ordered sets into categories.%
\footnote{
    We let $\Cat$ denote the $(\infty,1)$-category of $(\infty,1)$-categories and refer to its objects as ``categories''.
    Moreover, we let $\An \subset \Cat$ denote the full subcategory of $(\infty)$-groupoids/spaces and refer to its objects as ``animae''.
}
There is an associated adjunction
\[\ac \colon \sAn \adj \Cat \cocolon \nerve\] 
with the left adjoint $\ac$, short for associated category, given by left Kan extending $[-]$ along the Yoneda embedding $\simp \rightarrow \Fun(\Dop, \An) = \sAn$.
The right adjoint we shall call the Rezk nerve. It is given by 
\[\nerve_n(C) = \Hom_\Cat([n],C) = \core( \Fun([n],C)).\]
The following theorem is one of the foundational results on higher categories, originally (implicitly) suggested by Rezk in \cite{Rezk}.

\begin{thm*}
    The Rezk nerve
    \[
        \nerve \colon \Cat \too \sAn
    \]
    is fully faithful and its essential image is spanned by the complete Segal animae.
\end{thm*}

We recall that a Segal anima is a simplicial anima $X$ for which the Segal maps
\[e_i \colon [1] \longrightarrow [n], \quad 0 \mapsto i-1,\ 1 \mapsto i\]
induce equivalences
\[(e_1, \dots, e_n) \colon X_n \longrightarrow X_1 \times_{X_0} X_1 \times_{X_0} \dots \times_{X_0} X_1.\]
If this is the case one calls a point $f \in X_1$ an equivalence, if there are $\sigma, \tau \in X_2$ with 
\[d_0(\sigma) \simeq f\quad \text{and} \quad d_2(\tau) \simeq f\] and such that both $d_1(\sigma)$ and $d_1(\tau)$ lie in the essential image of the degeneracy $s\colon X_0 \rightarrow X_1$. Denote the collection of components of $X_1$ containing equivalences by $X_1^\sim$ and note that $s$ factors through the inclusion $X_1^\sim \subseteq X_1$ on account of the totally degenerate $2$-simplices in $X$. Then a Segal anima $X$ is called complete if $s \colon X_0 \rightarrow X_1^\sim$ is an equivalence.

In a point-set formulation the theorem was originally proven by Joyal and Tierney in \cite[Section 4]{joyaltierney} and in the formulation above it was established by Lurie in \cite[Corollary 4.3.16]{luriegoo}, but the proof there still makes heavy use of point-set models for both sides. The purpose of the present paper is to give what one might call an invariant proof, that avoids recourse to any particular model for $\Cat$, and that is furthermore rather simple, granted a few key ingredients: Our arguments largely only use fundamental properties of (un)straightening, localisations and Kan extensions. In particular, it is also very different in spirit from the proof of Barwick and Schommer-Pries in \cite{BSP} which characterises both sides by a universal property.

\subsection*{Acknowledgements}
We wish to thank Shaul Barkan for useful discussions, the Department of Mathematical Sciences at the University of Copenhagen for its hospitality during two visits of FH in which this article took shape.

FH was further supported by the German Research Foundation through the research centre `Integral structures in Geometry and Representation theory' (grant no.\ 491392403 - TRR 358) at the University of Bielefeld and JS by the Independent Research Fund Denmark (grant no.\ 10.46540/3103-00099B) and the Danish National Research Foundation through the `Copenhagen Centre for Geometry and Topology' (grant no.\ CPH-GEOTOP-DNRF151).

\section{Recollections on Segal animae}

For any simplicial anima $X$ and any $x, y \in X_0$ we define $\Hom_X(x, y)$ via the pullback:
    \[\begin{tikzcd}
        \Hom_{X}(x,y) \ar[r] \ar[d] & X_{1} \ar[d,"{(}d_{1}{,}d_0{)}"] \\
        \ast \ar[r,"{(}x{,}y{)}"] & X_0 \times X_0.
    \end{tikzcd}\]
In general, there is no composition operation for these $\Hom_X(-, -)$, but there is a canonical span:
\[
    \Hom_X(y, z) \times \Hom_X(x, y) 
    \xleftarrow{(d_0,d_2)} \{(x,y,z)\} \times_{X_0^3} X_2 
    \xrightarrow{d_1} \{(x, z)\} \times_{X_0^2} X_1 = \Hom_X(x, z)
\]
If $X$ is a Segal anima, then the left-pointing map is an equivalence and we do obtain the standard `composition' operation.
Given $f\in \Hom_X(x,y)$ and $g \in \Hom_X(y,z)$ we let $g \circ f \in \Hom_X(x,z)$ denote their composite.
One checks that this is associative (up to homotopy) with units (up to homotopy) given by degenerate edges.
Essentially by definition a point $f \in X_1$ defines an object of $\Hom_X(d_1(f),d_0(f))$ that is invertible for this operation if and only if $f \in X_1^\sim$. 

\begin{lem}\label{nerve-is-CS}
    For any category $C$ its Rezk nerve $\nerve(C)$ is a complete Segal anima and  $\Hom_C(x,y) \simeq \Hom_{\nerve C}(x,y)$ for all $x,y \in C$.
\end{lem}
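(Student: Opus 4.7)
My plan is to handle the three components — the Segal condition for $\nerve(C)$, the Hom-identification, and completeness — separately, with the one piece of genuine work being the identification of Segal-equivalences in $\nerve(C)$ with categorical isomorphisms in $C$. For the Segal condition, I would take as a basic input about $\Cat$ that the spine
\[[1]\sqcup_{[0]}[1]\sqcup_{[0]}\cdots\sqcup_{[0]}[1]\longrightarrow[n]\]
(with the pushout taken in $\Cat$) is an equivalence. Granted this, the Segal condition falls out formally: $\Fun(-,C)$ sends this colimit to a limit, and $\core$ preserves limits as the right adjoint of the fully faithful inclusion $\An\hookrightarrow\Cat$. Composing the two yields
\[\nerve_n(C)\;\simeq\;\nerve_1(C)\times_{\nerve_0(C)}\cdots\times_{\nerve_0(C)}\nerve_1(C).\]

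For the Hom-identification, I would simply observe that by construction
\[\Hom_{\nerve C}(x,y)\;=\;\{(x,y)\}\times_{\core(C)^2}\core\Fun([1],C),\]
and since $\core$ preserves pullbacks this agrees with $\core(\{(x,y)\}\times_{C\times C}\Fun([1],C))$, a standard model for the Hom-anima $\Hom_C(x,y)$.

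For completeness, the plan is to identify $\nerve_1(C)^\sim$ with $\core\Fun(E,C)$, where $E$ denotes the walking isomorphism (the contractible groupoid on two objects). This is the only step that requires genuine unwinding of definitions: a 2-simplex $\sigma$ in $\nerve(C)$ with $d_0\sigma\simeq f$ and $d_1\sigma$ degenerate amounts to a functor $[2]\to C$ exhibiting an identity as $g\circ f$, and so provides a left inverse of $f$; similarly $\tau$ yields a right inverse. Hence Segal-equivalences in $\nerve(C)$ are precisely the isomorphisms in $C$, so that $\nerve_1(C)^\sim=\core\Fun(E,C)$. Once this is established, the equivalence $[0]\to E$ in $\Cat$ gives an equivalence $\Fun(E,C)\xrightarrow{\sim}\Fun([0],C)=C$, and on cores this exhibits the degeneracy $s\colon\nerve_0(C)\to\nerve_1(C)^\sim$ as the sought-after equivalence.
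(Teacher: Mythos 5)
Your argument is correct and follows essentially the same route as the paper: the Segal condition from the spine decomposition of $[n]$ as an iterated pushout in $\Cat$ together with $\core\Fun(-,C)$ turning these colimits into limits, the Hom-identification from $\core$ preserving pullbacks, and completeness from the identification of Segal-equivalences in $\nerve(C)$ with equivalences in $C$ (the paper phrases this last step via $[1]\to[0]$ being a localisation rather than via the walking isomorphism $E\simeq[0]$, but that is the same fact). The only nitpick is that a $\sigma$ with $d_0\sigma\simeq f$ and $d_1\sigma$ degenerate witnesses $\mathrm{id}\simeq f\circ d_2\sigma$ and hence a \emph{right} inverse of $f$, not a left inverse, with $\tau$ supplying the other one; the conclusion is unaffected.
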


\begin{proof}
    By iteration, the Segal condition is indeed an immediate consequence of the fact that the diagram
    \[\begin{tikzcd}
        {[}0{]} \ar[r,"0"]\ar[d,"1"] & {[}n{]} \ar[d,"d_{0}"] \\
        {[}1{]} \ar[r,"e_1"] & {[}n+1{]}
    \end{tikzcd}\]
    in $\Cat$ is cocartesian for every $n$. 
    Completeness similarly follows from $[1] \rightarrow [0]$ being a localisation and the observation that $f \in \nerve_1(C)$ is in $\nerve_1(C)^\sim$ if and only if $f$ is an equivalence in $C$ in the usual sense. The final claim follows immediately from the definitions.
\end{proof}

Since fully faithful and essentially surjective functors among categories are equivalences we also learn:

\begin{obs}\label{obs:nerve-conservative}
 The nerve $\nerve \colon \Cat \rightarrow \sAn$ is conservative. 
\end{obs}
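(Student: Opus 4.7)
The plan is to deduce conservativity directly from Lemma~\ref{nerve-is-CS} together with the cited standard fact that any fully faithful and essentially surjective functor of categories is an equivalence. Given a functor $F \colon C \to D$ such that $\nerve(F)$ is an equivalence in $\sAn$, my goal is to verify that $F$ is both essentially surjective and fully faithful, and then invoke the cited principle.

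For essential surjectivity, I would evaluate $\nerve(F)$ at simplicial level $0$. Since $\nerve_0(-) = \core(\Fun([0],-)) = \core(-)$, the induced map $\core(F) \colon \core(C) \to \core(D)$ is an equivalence of animae; in particular it is surjective on $\pi_0$, which is precisely the statement that every object of $D$ is equivalent to one of the form $F(x)$.

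For full faithfulness, I would fix objects $x, y \in C$ and apply the hom-anima identification of Lemma~\ref{nerve-is-CS}. The construction $\Hom_X(x,y)$ depends functorially on the simplicial anima $X$ through its defining pullback, so the equivalence $\nerve(F) \colon \nerve C \to \nerve D$ restricts to an equivalence $\Hom_{\nerve C}(x,y) \to \Hom_{\nerve D}(Fx, Fy)$; combined with Lemma~\ref{nerve-is-CS} this produces the required equivalence $\Hom_C(x,y) \to \Hom_D(Fx, Fy)$.

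There is essentially no obstacle: once one unpacks $\nerve$ at simplicial levels $0$ and $1$, the observation follows at once from Lemma~\ref{nerve-is-CS} and the standard characterisation of equivalences of categories, exactly as signposted by the sentence preceding the statement.
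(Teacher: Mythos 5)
Your proposal is correct and is exactly the argument the paper intends: the paper's ``proof'' is the single sentence preceding the statement, which signposts precisely your deduction of essential surjectivity from $\nerve_0 = \core$ and of full faithfulness from the hom-anima identification in Lemma~\ref{nerve-is-CS}. You have merely written out the details the paper leaves implicit.
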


Given \cref{nerve-is-CS} and \cref{obs:nerve-conservative}, the triangle identities reduce the proof of the main theorem to showing that the adjunction unit $X \rightarrow \nerve(\ac(X))$ is an equivalence for every complete Segal anima $X$. For a general simplicial anima $X$, it provides us with maps
\[\Hom_X(x,y) \longrightarrow \Hom_{\nerve\ac(X)}(x,y) \simeq \Hom_{\ac(X)}(x,y).\]
The heavy lifting of the proof of the equivalence between categories and complete Segal animae will be in proving that these maps are equivalences for all Segal animae $X$. 
Namely, we will use this in conjunction with:

\begin{lem}\label{equiv-between-CS}
    A map $\varphi\colon X \to Y$ of complete Segal animae is an equivalence if and only if $\pi_0(X_0) \to \pi_0(Y_0)$ is surjective and the induced map
    \[\Hom_X(x,x') \longrightarrow \Hom_Y(\varphi(x),\varphi(x'))\]
    is an equivalence for all $x,x' \in X_0$. 
\end{lem}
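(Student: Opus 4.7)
Only the ``if'' direction requires work. The plan is to show $\varphi_n \colon X_n \to Y_n$ is an equivalence for all $n$, reducing first via the Segal condition to the cases $n = 0$ and $n = 1$, and then from $n = 1$ to $n = 0$.

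Granted that $\varphi_0$ is an equivalence, I will conclude $\varphi_1$ is an equivalence by comparing the source-target maps $X_1 \to X_0 \times X_0$ and $Y_1 \to Y_0 \times Y_0$: their fibres over $(x, x')$ and $(\varphi x, \varphi x')$ are the hom-spaces, on which $\varphi$ is an equivalence by hypothesis, and together with $\varphi_0 \times \varphi_0$ being an equivalence this forces $\varphi_1$ to be one. The Segal equivalences $X_n \simeq X_1 \times_{X_0}^n X_1$ then extend this to all $n$.

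The substantive case is $n = 0$, where completeness becomes indispensable. The composite $X_0 \xrightarrow{\sim} X_1^\sim \hookrightarrow X_1 \xrightarrow{(d_1, d_0)} X_0 \times X_0$ is the diagonal, so pulling back along $(x, x') \colon \ast \to X_0 \times X_0$ yields an equivalence between the path space $\Hom_{X_0}(x, x')$ in the anima $X_0$ and the subspace $\Hom_X(x, x')^\sim$ of equivalences inside $\Hom_X(x, x')$. Next I need the hypothesised equivalence $\Hom_X(x, x') \simeq \Hom_Y(\varphi x, \varphi x')$ to restrict to one on these subspaces of equivalences; this follows because being an equivalence is a property of the component detected by admitting a two-sided inverse under the Segal composition, and $\varphi$, being compatible with composition and inducing bijections on hom-sets in the homotopy categories, both preserves and reflects such inverses. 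Combining yields an equivalence $\Hom_{X_0}(x, x') \simeq \Hom_{Y_0}(\varphi x, \varphi x')$; together with the assumed $\pi_0$-surjectivity this forces $\varphi_0$ to be an equivalence, since higher $\pi_n$ are computed from these path spaces and $\pi_0$-injectivity follows from their nonemptiness.

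I expect the main obstacle to be precisely this use of completeness that converts the hypothesised hom-space equivalence into an equivalence of path spaces in $X_0$: once this translation is in hand, the remaining reductions via the Segal condition are essentially formal.
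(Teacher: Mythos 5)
Your proposal is correct and follows essentially the same route as the paper: the paper also first proves conservativity by lifting one-sided inverses along the $\pi_0$-bijections on hom-animae, then uses completeness to identify the fibres of $X_1^\sim \to X_0\times X_0$ with path animae of $X_0$ (phrased there as a pasting of cartesian squares over $Y$), deduces that $\varphi_0$ is an equivalence from this together with $\pi_0$-surjectivity, and finally bootstraps to $\varphi_1$ and all $\varphi_n$ via the hom-space hypothesis and the Segal condition. The only difference is presentational: the paper organises the fibrewise comparisons as a single pasted rectangle of pullback squares rather than arguing over each pair of basepoints.
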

\begin{proof}
    We begin by showing that $\varphi$ is conservative, meaning that for any $f \in X_1$ with $\varphi(f) \in Y_1^\sim$ we must already have had $f \in X_1^\sim$.
    Let $x = d_1(f)$ and $y = d_0(f)$ so that $f \in \Hom_X(x, y)$.
    For $\varphi(f)$ to be an equivalence we need a left and right inverse.
    We will focus on the left inverse $g \in \Hom_Y(\varphi(y), \varphi(x))$ which satisfies $g \circ \varphi(f) \simeq \id_{\varphi(x)}$.
    Because $\varphi$ is induces $\pi_0$-surjections on morphism animae we can find $\widetilde{g} \in \Hom_X(y, x)$ mapping to $g$ and this will satisfy that 
    $\varphi(\widetilde{g} \circ f) \simeq g \circ \varphi(f) \simeq \id_{\varphi(x)} \simeq \varphi(\id_x)$.
    Since $\varphi$ is a $\pi_0$-injection on morphism animae we can conclude that $\widetilde{g} \circ f \simeq \id_x$, showing that $f$ has a left inverse.
    One similarly concludes that $f$ has a right inverse and hence is an equivalence.
    This shows that $\varphi$ is conservative, meaning that the middle square in the following diagram is cartesian:
    \[\begin{tikzcd}
        X_0 \ar[r, "s_0"] \ar[d] & X_1^\sim \ar[r, hook] \ar[d] & X_1 \ar[r, "{(d_1, d_0)}"] \ar[d] & X_0 \times X_0 \ar[d] \\
        Y_0 \ar[r, "s_0"] & Y_1^\sim \ar[r, hook] & Y_1 \ar[r, "{(d_1, d_0)}"] & Y_0 \times Y_0. 
    \end{tikzcd}\]
    The right square is cartesian because the assumption precisely means that all induced maps on horizontal fibres are equivalences. 
    In the left square the horizontal maps are equivalences because we assumed that $X$ and $Y$ are complete.
    By pasting we conclude that the entire rectangle is cartesian.
    Comparing the outer horizontal fibres we see that $\varphi\colon X_0 \to Y_0$ induces equivalences of path animae between any two points.
    On the other hand we assumed that $\pi_0(X_0) \to \pi_0(Y_0)$ is surjective, so $X_0 \to Y_0$ must be an equivalence.
    Given this, the right pullback square implies that $X_1 \to Y_1$ is an equivalence and then the Segal condition implies that $X_n \to Y_n$ is an equivalence for all $n$.
\end{proof}

\begin{rem}\label{ac of sSet}
To compare our later description of $\ac(X)$ to previous results in the literature (see \cref{ac literature}), let us also remind the reader of the fact, that
the following two ways of obtaining a category from a simplicial set agree:
% https://q.uiver.app/#q=WzAsNCxbMCwxLCJcXG1hdGhybXtzU2V0fSJdLFsxLDAsIlxcc0FuIl0sWzIsMSwiXFxDYXQuIl0sWzEsMiwiXFxtYXRocm17c1NldH1bXFxtYXRocm17Y2V9XnstMX1dIFxcc2ltZXEgXFxtYXRocm17cUNhdH1bXFxtYXRocm17Y2V9XnstMX1dIl0sWzAsM10sWzEsMiwiXFxhYyJdLFswLDEsIiIsMSx7InN0eWxlIjp7InRhaWwiOnsibmFtZSI6Imhvb2siLCJzaWRlIjoidG9wIn19fV0sWzMsMl1d
\[\begin{tikzcd}[row sep=0]
	& \sAn \\
	{\mathrm{sSet}} && {\Cat.} \\
	& {\mathrm{sSet}[\mathrm{ce}^{-1}] \simeq \mathrm{qCat}[\mathrm{ce}^{-1}]}
	\arrow["\ac", from=1-2, to=2-3]
	\arrow[hook, from=2-1, to=1-2]
	\arrow[from=2-1, to=3-2]
	\arrow[from=3-2, to=2-3]
\end{tikzcd}\]
Here $\mathrm{qCat} \subset \mathrm{sSet}$ denotes the full subcategory spanned by the quasi-categories and in the bottom term we have inverted the categorical equivalences.

This can be seen as follows. One first observes that also the top functor inverts all categorical equivalences: It inverts inner anodynes by direct computation on inner horns and also the inclusion $\{0\} \rightarrow J$, where $J$ is the walking isomorphism (for example by a skeletal induction or as an application of Corollary \ref{cor2} below). The former maps allow one to reduce to equivalences between quasi-categories and this case in turn follows from the above by factoring an equivalence into a trivial cofibration and a trivial fibration (a section of which is again a trivial cofibration) in Joyal's model structure on $\mathrm{sSet}$. 

To produce a natural transformation between the two functors it thus suffices  to compare them on $\mathrm{qCat} \subset \mathrm{sSet}$ on account of the equivalence $\mathrm{qCat}[\mathrm{ce}^{-1}] \simeq \mathrm{sSet}[\mathrm{ce}^{-1}]$. For $X \in \mathrm{qCat}$ we can then consider the map
\[X \cong \Hom_{\mathrm{qCat}}(\Delta^-,X) \Longrightarrow \Hom_{\Cat}([-],X) \simeq \nerve(X)\]
 of simplicial animae, which adjoins to a map $\ac(X) \rightarrow X$, constituting a natural transformation from the composite 
\[\mathrm{qCat} \subset \mathrm{sSet} \subset \sAn \xrightarrow{\ac} \Cat\]
to the localisation map $\mathrm{qCat} \rightarrow \Cat$. This transformation is an equivalence on simplices by direct calculation and the claim then follows by a skeletal induction. 

\end{rem}

\section{The computation of associated categories}

\subsection{The associated category as a localisation}

To approach morphism animae in categories associated to Segal animae, we start by noting that the pointwise formula for left Kan extension gives us
\[\ac(X) = \colim_{[n] \in \simp/X}[n],\]
where the colimit is formed along the composition
\[\simp/X \longrightarrow \simp \xrightarrow{[-]} \Cat.\]
The slice category $\simp/X$ fits into a diagram of categories
\[\begin{tikzcd} 
\simp/X \ar[r] \ar[d] \ar[dr, phantom, very near start, "\lrcorner"] & \sAn/X \ar[d] \ar[r]  \ar[dr, phantom, very near start, "\lrcorner"] & \mathrm{Ar}( \sAn ) \ar[d,"{(}s{,}t{)}"] \\
\simp \times \ast \ar[r,"\mathcal Y"] & \sAn \ar[r, "{(\id,X)}"] & \sAn \times \sAn
\end{tikzcd}\]
where both squares (and their composite) are cartesian, and thus $\simp/X \to \simp$ is a right fibration.

Now recall that the colimit of any functor $F \colon C \rightarrow \Cat$ is given by $\Un(F)[\mathrm{cc}^{-1}]$, see e.g.\ \cite[Corollary 02V0]{Kerodon}, where $\mathrm{cc}$ denotes the collection of cocartesian arrows in the (cocartesian) unstraightening $\Un(F) \rightarrow C$, and that generally there is a canonical cartesian square 
\[\begin{tikzcd}
    \Un(F \circ G) \ar[r] \ar[d] \ar[dr, phantom, very near start, "\lrcorner"] & \Un(F) \ar[d] \\
    D \ar[r] & C
\end{tikzcd}\]
for any $G \colon D \rightarrow C$, both vertical maps being pullbacks of the universal cocartesian fibration $\ast\sslash\Cat \rightarrow \Cat$.

Applied to the situation at hand, we note that by unwinding definitions one finds that $\Un([-])$ is the lax slice category $\ast \sslash \simp$, whose objects are pairs $([n],k)$ with $0 \leq k \leq n$ and morphisms $([n],k) \rightarrow ([m],l)$ are monotone maps $a \colon [n] \rightarrow [m]$ with $a(k) \leq l$. 
Such a morphism $a$ is cocartesian for the forgetful functor $\ast \sslash \simp \rightarrow \simp$ if and only if $a(k) = l$.
The unstraightening of $[-]\colon \simp/X \to \Cat$ is then given by the category $* \sslash \simp /X$ that fits in the cartesian square:
\[\begin{tikzcd}
    * \sslash \simp /X \ar[r] \ar[d] & * \sslash \simp \ar[d] \\
    \simp/X \ar[r] & \simp.
\end{tikzcd}\]
Here, the vertical functors are cocartesian fibrations and the horizontal functors are right fibrations.

In summary we have:

\begin{obs}\label{observationac=lax}
For every simplicial anima $X$, the category $\ac(X)$ is given as the localisation of $* \sslash \simp /X$ at those maps whose image $a \colon ([n],k) \rightarrow ([m],l)$ in $\ast \sslash \simp$ has $a(k) = l$.
\end{obs}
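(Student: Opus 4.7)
The statement is essentially a bookkeeping exercise assembling the three pieces of machinery developed immediately before it, so my plan is simply to chain them together.

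I would start by invoking the pointwise left Kan extension formula (just recalled above), which expresses $\ac(X)$ as the colimit of the composite $F \colon \simp/X \to \simp \xrightarrow{[-]} \Cat$. Next I would apply the identification $\colim F \simeq \Un(F)[\mathrm{cc}^{-1}]$ from \cite[Corollary 02V0]{Kerodon}, reducing the task to describing $\Un(F)$ together with its collection of cocartesian edges.

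To do so I would invoke the base-change cartesian square for unstraightening along a composite: taking $G \colon \simp/X \to \simp$ and $[-] \colon \simp \to \Cat$, this identifies $\Un([-] \circ G)$ with the pullback of $\Un([-]) \simeq * \sslash \simp$ along $G$, which is by construction $* \sslash \simp /X$. Since both vertical functors in the pullback square are obtained as pullbacks of the universal cocartesian fibration $\ast\sslash\Cat \to \Cat$, base change preserves the cocartesian property of edges, and the cocartesian arrows in $*\sslash\simp/X$ are exactly those whose image in $* \sslash \simp$ is cocartesian. These were already identified with the morphisms $a$ satisfying $a(k) = l$, concluding the argument.

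I do not foresee any genuine obstacle here: once the description of $\Un([-])$ as the lax slice and the base-change compatibility of unstraightening are in place, everything is formal. The only point requiring a little care is distinguishing the pullback of a cocartesian fibration from the preservation of cocartesian \emph{edges} under such a pullback, but the latter is a standard feature of the universal cocartesian fibration.
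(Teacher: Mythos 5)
Your proposal is correct and follows exactly the route the paper takes: the observation is stated precisely as a summary of the preceding discussion, which chains the pointwise Kan extension formula, the identification $\colim F \simeq \Un(F)[\mathrm{cc}^{-1}]$, and the base-change square for unstraightening along $\simp/X \to \simp$, with cocartesian edges detected in $\ast\sslash\simp$. Nothing to add.
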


Before we analyse this formula further let us describe the unit of the adjunction $\ac \dashv \nerve$ in terms of it.
Recall that by general nonsense the unit is given by 
\[X \simeq \Hom_{\sAn}(\Delta^-,X) \xrightarrow{\ac} \Hom_{\Cat}([-],\ac(X)) \simeq \nerve(\ac(X))\]
where the left-hand equivalence is induced by the Yoneda embedding
and on the right we use $\ac(\Delta^-) = [-]$.

Now using the pointwise formula for left Kan extensions this translates, naturally in both $[n] \in \simp^{\mathrm{op}}$ and $X \in \sAn$, to the map that takes a morphism $\sigma \colon \Delta^n \rightarrow X$ to the map
\[[n] = \colim_{[k] \in \simp/\Delta^n}[k] \xrightarrow{\sigma} \colim_{[k] \in \simp/X}[k],\]
induced by letting $\sigma$ act on the index categories. Unwinding the unstraightening formula for colimits in $\Cat$, the unit is thus induced by the maps
\[X \simeq \Hom_{\sAn}(\Delta^-,X) \longrightarrow \Hom_\mathrm{Pair}((\ast\sslash\simp/\Delta^-,\mathrm{cc}), (\ast\sslash\simp/X,\mathrm{cc})) \longrightarrow \Hom_\Cat([-], \ac(X))\]
the first coming from the functoriality of slice categories and the second from postcomposition with the localisation map $\ast\sslash\simp/X \longrightarrow \ac(X)$ constructed above then factoring through the localisation 
\[\ast\sslash\simp/\Delta^n \longrightarrow \ac(\Delta^n) \simeq [n],\]
which is explicitly given by 
\[([k],l \in [k], [k] \xrightarrow{\alpha} [n]) \longmapsto \alpha(l).\]
This latter localisation map has a right adjoint:
\[\lambda_n \colon [n] \rightarrow \ast\sslash\simp/\Delta^n, \quad i \longmapsto ([n],i, \mathrm{id}).\]
So for fixed $n$ the map 
\[\Hom_\mathrm{Pair}((\ast\sslash\simp/\Delta^n,\mathrm{cc}), (\ast\sslash\simp/X,\mathrm{cc})) \longrightarrow \Hom_\Cat([n], \ac(X))\]
(and thus the unit) can equally well be described as postcomposition with the localisation of the target and precomposition with $\lambda_n$. 

This description is, however, not quite natural in $n$: For $d \colon [n] \rightarrow [m]$ in $\simp$ the square 
\begin{equation}\label{eq:sq1}
\begin{tikzcd}
    \ast\sslash\simp/\Delta^n \ar[r,"\mathrm{fgt}"] \ar[d, "d_*"'] & {[n]} \ar[d, "d"] \\
    \ast\sslash\simp/\Delta^m \ar[r,"\mathrm{fgt}"] & {[m]}
\end{tikzcd}\end{equation}
is \emph{not} right-adjointable, in other words the Beck-Chevalley transformation $\mathrm{BC}_d$ 
\begin{equation}\label{eq:sq2}
\begin{tikzcd}
	{\ast\sslash\simp/\Delta^n} & {[n]} \\
	{\ast\sslash\simp/\Delta^m} & {[m]}
	\arrow["{\lambda_n}"', from=1-2, to=1-1]
	\arrow["{\lambda_m}", from=2-2, to=2-1]
	\arrow["{d_*}"', from=1-1, to=2-1]
	\arrow["d", from=1-2, to=2-2]
	\arrow["{\mathrm{BC}_d}"{description}, shorten <=3pt, shorten >=3pt, Rightarrow, from=1-1, to=2-2]
\end{tikzcd}\end{equation}
given by the unit and counit
\[\mathrm{BC}_d \colon d_* \circ \lambda_n \Longrightarrow \lambda_m \circ \mathrm{fgt} \circ d_* \circ \lambda_n \simeq \lambda_m \circ d \circ \mathrm{fgt}  \circ \lambda_n \Longrightarrow \lambda_m \circ d\]
% https://q.uiver.app/#q=WzAsNCxbMCwwLCJcXGFzdFxcc3NsYXNoXFxzaW1wL1xcRGVsdGFebiJdLFswLDEsIlxcYXN0XFxzc2xhc2hcXHNpbXAvXFxEZWx0YV5tIl0sWzEsMCwiW25dIl0sWzEsMSwiW21dIl0sWzIsMCwiXFxsYW1iZGFfbiIsMl0sWzMsMSwiXFxsYW1iZGFfbSJdLFswLDEsImRfKiIsMl0sWzIsMywiZCJdLFswLDMsIlxcbWF0aHJte0JDfV9kIiwxLHsic2hvcnRlbiI6eyJzb3VyY2UiOjEwLCJ0YXJnZXQiOjEwfSwibGV2ZWwiOjJ9XV0=
is not invertible.

Concretely, in the case at hand it is given by the natural transformation $d_* \circ \lambda_n \Rightarrow \lambda_m \circ d$ whose value on $i \in [n]$ is the morphism
\[
    d_*(\lambda_n(i)) = ([n], i, d\colon \Delta^n \to \Delta^m) \too 
    ([m], d(i), \id_{\Delta^m}) = \lambda_m(d(i))
\]
induced by $d$ itself.
This morphism is cocartesian with respect to $\ast\sslash\simp/\Delta^m \to \simp/\Delta^n$ by the characterisation above. And since being cocartesian for $\ast\sslash\simp/X \rightarrow \simp/X$ generally only depends on the image of a morphism in $\ast\sslash \simp$, we get a well-defined diagram:
% https://q.uiver.app/#q=WzAsOCxbMSwwLCJcXEhvbV97XFxDYXQvKFxcYXN0XFxzc2xhc2hcXHNpbXApfShcXGFzdFxcc3NsYXNoXFxzaW1wL1xcRGVsdGFebSwgXFxhc3RcXHNzbGFzaFxcc2ltcC9YKSJdLFsyLDAsIlxcRnVuKFttXSxcXGFzdFxcc3NsYXNoXFxzaW1wL1gpXntcXG1hdGhybXtjY319Il0sWzEsMSwiXFxIb21fe1xcQ2F0LyhcXGFzdFxcc3NsYXNoXFxzaW1wKX0oXFxhc3RcXHNzbGFzaFxcc2ltcC9cXERlbHRhXm4sIFxcYXN0XFxzc2xhc2hcXHNpbXAvWCkiXSxbMiwxLCJcXEZ1bihbbl0sXFxhc3RcXHNzbGFzaFxcc2ltcC9YKV57XFxtYXRocm17Y2N9fSJdLFswLDAsIlxcSG9tX1xcc0FuKFxcRGVsdGFebSxYKSJdLFswLDEsIlxcSG9tX1xcc0FuKFxcRGVsdGFebixYKSJdLFszLDAsIlxcSG9tX1xcQ2F0KFttXSxcXGFjKFgpKSJdLFszLDEsIlxcSG9tX1xcQ2F0KFtuXSxcXGFjKFgpKSJdLFswLDEsIlxcbGFtYmRhX21eKiJdLFsyLDMsIlxcbGFtYmRhX25eKiIsMl0sWzQsNV0sWzAsMl0sWzEsM10sWzIsMSwiXFxtYXRocm17QkN9X2ReKiIsMSx7InNob3J0ZW4iOnsic291cmNlIjoxMCwidGFyZ2V0IjoxMH0sImxldmVsIjoyfV0sWzUsMl0sWzQsMF0sWzMsN10sWzEsNl0sWzYsN11d
\[\begin{tikzcd}
	{\Hom_\sAn(\Delta^m,X)} & {\Hom_{\Cat/(\ast\sslash\simp)}(\ast\sslash\simp/\Delta^m, \ast\sslash\simp/X)} & {\Fun([m],\ast\sslash\simp/X)^{\mathrm{cc}}} & {\Hom_\Cat([m],\ac(X))} \\
	{\Hom_\sAn(\Delta^n,X)} & {\Hom_{\Cat/(\ast\sslash\simp)}(\ast\sslash\simp/\Delta^n, \ast\sslash\simp/X)} & {\Fun([n],\ast\sslash\simp/X)^{\mathrm{cc}}} & {\Hom_\Cat([n],\ac(X))}
	\arrow[from=1-1, to=1-2]
	\arrow[from=1-1, to=2-1]
	\arrow["{\lambda_m^*}", from=1-2, to=1-3]
	\arrow[from=1-2, to=2-2]
	\arrow[from=1-3, to=1-4]
	\arrow[from=1-3, to=2-3]
	\arrow[from=1-4, to=2-4]
	\arrow[from=2-1, to=2-2]
	\arrow["{\mathrm{BC}_d^*}"{description}, shorten <=8pt, shorten >=8pt, Rightarrow, from=2-2, to=1-3]
	\arrow["{\lambda_n^*}"', from=2-2, to=2-3]
	\arrow[from=2-3, to=2-4]
\end{tikzcd}\]
where the superscript $\mathrm{cc}$ denotes the wide subcategory of $\Fun([-],\ast\sslash\simp/X)$ spanned by those transformations which take cocartesian values, so that they are inverted by postcomposition with the localisation $\ast\sslash\simp/X \to \ac(X)$.
The outside rectangle is a priori only a lax square, but as its constituents are animae it really is an ordinary square.

Because pasting squares (\ref{eq:sq1}) and (\ref{eq:sq2}) along $d_*$ yields a trivial square, we find:
\begin{lem}\label{lemma unit}
    The outer rectangle in the above diagram agrees with the naturality square
    \[\begin{tikzcd}
        X_m \ar[r] \ar[d] & \nerve_m(\ac(X)) \ar[d] \\
        X_n \ar[r] & \nerve_n(\ac(X))
    \end{tikzcd}\]
    of the unit, for every $X \in \sAn$ and $d \colon [n] \rightarrow [m]$ in $\simp$.
\end{lem}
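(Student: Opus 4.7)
The plan is to establish that the outer rectangle commutes as a square of animae, and then to observe that its rows and columns automatically identify it with the naturality square of the unit. By the reformulation of the unit via precomposition with $\lambda_k$ given just above the lemma, the top and bottom horizontal composites of the diagram are precisely the unit maps at levels $m$ and $n$, while the outer vertical edges are $d^\ast$ on $X$ (as the leftmost vertical is $X_m \to X_n$ coming from $d \colon \Delta^n \to \Delta^m$) and $d^\ast$ on $\nerve(\ac X)$ (as the rightmost vertical is precomposition with $d \colon [n] \to [m]$). So once commutativity is shown, the rectangle coincides with the asserted naturality square by construction.

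I would then decompose the outer rectangle into its three constituent subsquares. The leftmost square commutes by functoriality of the lax slice $\ast \sslash \simp / -$ in both slots. The rightmost square commutes by naturality of postcomposition with the localisation $L_X \colon \ast\sslash\simp/X \to \ac(X)$. The middle square is only lax: its obstruction to commutativity is the whiskered transformation $F \cdot \mathrm{BC}_d$, where $F \colon \ast\sslash\simp/\Delta^m \to \ast\sslash\simp/X$ is the functor one uses to test the square. The main step is to verify that this obstruction vanishes after postcomposition with $L_X$. By the explicit description just above the lemma, $\mathrm{BC}_d$ has values in cocartesian arrows of $\ast\sslash\simp/\Delta^m$; since $F$ lies over $\ast\sslash\simp$, and being cocartesian is detected by the image in $\ast\sslash\simp$, the whiskered transformation $F \cdot \mathrm{BC}_d$ still has values in cocartesian arrows, now of $\ast\sslash\simp/X$. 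These are inverted by $L_X$ by construction, so the middle square commutes at the level of $\Hom_\Cat([n], \ac X)$, and the outer rectangle commutes as a square of animae.

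The hint that pasting squares \eqref{eq:sq1} and \eqref{eq:sq2} along $d_\ast$ yields a trivial square is the universal incarnation of this vanishing: in the pasted rectangle the horizontal composites collapse to identities via $\mathrm{fgt} \circ \lambda = \id$, and the composite 2-cell is $\mathrm{fgt} \cdot \mathrm{BC}_d$, whose component at $i \in [n]$ is obtained by applying $\mathrm{fgt}$ to the cocartesian arrow $([n], i, d) \to ([m], d(i), \id)$, forcibly the identity on $d(i)$ because $[m]$ is a poset. The delicate point, and the main thing to be careful about, is the interplay between the lax middle square at the $\Fun([n], \ast\sslash\simp/X)$ level (where $F \cdot \mathrm{BC}_d$ is genuinely non-identity) and the equality of maps one gets only after descending to $\Hom_\Cat([n], \ac X)$; since the outer four corners are all animae, no further $(\infty,2)$-coherence beyond this single 2-cell needs to be tracked.
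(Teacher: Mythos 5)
Your computations are all correct, but the logical skeleton has a gap at the decisive step. You establish two things: that the boundary of the outer rectangle matches that of the naturality square (the horizontal composites are the unit at levels $m$ and $n$ by the levelwise $\lambda_k$-reformulation, the verticals are $d^*$), and that the outer rectangle commutes (because postcomposition with the localisation $\ast\sslash\simp/X \to \ac(X)$ inverts the cocartesian-valued transformation $F\cdot\mathrm{BC}_d$). From this you conclude that the rectangle ``coincides with the asserted naturality square by construction''. That inference is not valid: a commutative square of animae is the datum of a homotopy filling it, and two commutative squares with the same boundary agree only if their filling homotopies can be identified rel boundary. Here the two fillings arise from genuinely different constructions --- the naturality square of the unit is filled by the strict naturality of the description ``postcompose with the localisation of the target, then descend along $\mathrm{fgt}\colon \ast\sslash\simp/\Delta^k \to [k]$'', while your outer rectangle is filled by the inverted $2$-cell coming from $\mathrm{BC}_d$ --- and identifying them is precisely the content of the lemma. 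It is also not a pedantic point: \cref{unit-cartesian} computes fibres of this square, and fibre products see the filling homotopy, not just the boundary. (Note too that the commutativity of the outer rectangle is already recorded in the paper just before the lemma statement, as a lax square of animae is automatically a square; so your main effort goes into re-proving something that is not the content of the lemma.)

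The ingredient needed to close the gap is exactly the fact you compute in your final paragraph, but deployed for a different purpose. Since $\lambda_k$ is a section of the localisation $\mathrm{fgt}$, precomposition with $\lambda_k$ is inverse to the equivalence given by precomposition with $\mathrm{fgt}$ from $\Fun([k],\ac(X))$ onto the $\mathrm{cc}$-inverting functors out of $\ast\sslash\simp/\Delta^k$; under these equivalences the strictly commuting naturality square of the unit is carried to the $\mathrm{BC}_d$-filled rectangle \emph{precisely because} the pasting of (\ref{eq:sq1}) and (\ref{eq:sq2}) along $d_*$ is the trivial $2$-cell --- which is what your computation that $\mathrm{fgt}\cdot\mathrm{BC}_d$ has identity components shows. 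So that observation is the crux of the identification of the two homotopies, not merely a ``universal incarnation'' of the invertibility of the middle $2$-cell; rewiring your last paragraph to serve this role, rather than to re-derive commutativity, would complete the proof and would then coincide with the paper's (one-line) argument.
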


\begin{rem}
In fact, the functors $\lambda_n$ assemble into a lax natural transformation $[-] \Rightarrow \ast\sslash\simp/\Delta^-$ of cosimplicial categories, e.g.\ by \cite[Corollary F]{HHLN}, from which one can extract a description of the entire unit, but care would be needed to even make sense of this in a fashion which does not lead to circularity with our main result. Luckily, we will be able to content ourselves with the morphism-wise statement above.
\end{rem}

We now analyse the categories $\ast \sslash \simp/X$ a little further, following an observation of Haugseng \cite[Section 2]{haugseng}. Consider the functor
\[\iota \colon \simp \longrightarrow \ast \sslash \simp, \quad [n] \mapsto ([n],n).\]
It is fully faithful, and one verifies that it has a right adjoint $\kappa$ given by $([n],k) \mapsto [k]$: A map $[m] \rightarrow [k]$ is the same as a map $([m],m) \rightarrow ([n],k)$. The unit of this adjunction is the identity on $\simp$ and the counit is given by the inclusions $([k],k) \rightarrow ([n],k)$, which are cocartesian for $\ast \sslash \simp \rightarrow \simp$ by the analysis above.

\begin{lem}
    This adjunction lifts to an adjunction
    \[\iota_X \colon \simp/X \adj \ast\sslash\simp/X \cocolon \kappa_X\]
    for each simplicial anima $X$.
\end{lem}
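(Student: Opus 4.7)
The plan is to exploit the pullback description $\ast\sslash\simp/X \simeq \ast\sslash\simp \times_\simp \simp/X$, together with the fact that $\iota \colon \simp \hookrightarrow \ast\sslash\simp$ is a fully faithful left adjoint, and so identifies $\simp$ with a coreflective full subcategory of $\ast\sslash\simp$. Since $\fgt \circ \iota = \id_\simp$, base-changing this inclusion along the right fibration $\ast\sslash\simp/X \to \ast\sslash\simp$ canonically identifies $\simp \times_{\ast\sslash\simp} (\ast\sslash\simp/X)$ with $\simp/X$, producing the desired fully faithful functor $\iota_X \colon \simp/X \hookrightarrow \ast\sslash\simp/X$ sending $([n], \sigma)$ to $([n], n, \sigma)$.

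For the right adjoint $\kappa_X$, I would invoke the general principle that base-changing a coreflective full inclusion along a right fibration yields again a coreflective full inclusion, with right adjoint computed by taking cartesian lifts of the ambient counit. Applied to the counit $\varepsilon \colon \iota\kappa \Rightarrow \id_{\ast\sslash\simp}$—whose component at $([n], k)$ is the cocartesian lift $([k], k) \to ([n], k)$ of the initial-segment inclusion $[k] \hookrightarrow [n]$—this yields $\kappa_X([n], k, \sigma) = ([k], \sigma|_{\Delta^k})$, with counit component the cartesian lift $([k], k, \sigma|_{\Delta^k}) \to ([n], k, \sigma)$.

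The only genuine obstacle is justifying the general principle invoked in the second step; this amounts to the compatibility of adjunctions with base change along right fibrations, which can either be cited or verified directly. Alternatively, one can bypass any abstract lemma by constructing $\iota_X$ and $\kappa_X$ explicitly as above and checking everything by hand: the composite $\kappa_X \circ \iota_X$ equals $\id_{\simp/X}$ (because $\varepsilon_{([n],n)} = \id_{([n],n)}$ by the triangle identity $\varepsilon_\iota = \id_\iota$, which in turn holds because the unit $\eta \colon \id_\simp \to \kappa\iota$ is itself the identity), so the unit of $\iota_X \dashv \kappa_X$ may be taken to be the identity, and the triangle identities for the adjunction then reduce immediately to those for $\iota \dashv \kappa$.
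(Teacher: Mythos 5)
Your proposal is correct and follows essentially the same route as the paper: there too $\iota_X$ is induced directly by $\iota$ via $([n],\sigma)\mapsto([n],n,\sigma)$, and the right adjoint $\kappa_X([m],l,\sigma)=([l],([l]\subseteq[m])^*\sigma)$ is obtained from the pointwise criterion for the existence of adjoints, using that both slice categories are right fibrations over $\ast\sslash\simp$ and $\simp$ and taking (co)unit components to be lifts of $([l],l)\to([m],l)$. The ``general principle'' you defer -- that a coreflective inclusion base-changes along a right fibration to a coreflective inclusion, with right adjoint given by cartesian lifts of the ambient counit -- is precisely what the paper's Hom-anima computation verifies, so your explicit fallback and the paper's argument coincide.
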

\begin{proof}
The functor $\iota_X$ is directly induced by $\iota$ using the fact that the composite $\ast\sslash\simp \rightarrow \simp \rightarrow \sAn$, which is used to define $\ast\sslash\simp/X$, precomposed with $\iota$ is the Yoneda embedding $\simp \rightarrow \sAn$, which is used to define $\simp/X$; put more simply we have 
\[\iota_X(n, x) \simeq ([n],n,x)\]
with $x \in X_n$.
Using that $*\sslash\simp/X$ and $\simp/X$ are right fibrations over $*\sslash\simp$ and $\simp$ one checks that the map
\[
\Hom_{\simp/X}(([n],x),([l],([l] \subseteq [m])^*y)) \too
\Hom_{\ast \sslash \simp/X}(([n],n,x),([m],l,y)) 
\]
given by applying $\iota_X$ and postcomposing with the morphism
$([l], l, ([l] \subseteq [m])^*y) \to ([m], l, y)$
is an equivalence, natural in $(n,x) \in \simp/X$, so that $\iota_X$ admits an adjoint $\kappa_X$ with 
\[\kappa_X([m],l,y) = ([l],([l] \subseteq [m])^*y)\]
as desired, see e.g.\ \cite[Proposition 02FV]{Kerodon}.
\end{proof}

We can now use this adjunction to simplify the formula for associated categories. To this end denote by $\mathrm{lv}$ the collection of morphisms in $\simp$ that preserve the last vertex, i.e.\ the maps $a \colon [n] \rightarrow [m]$ with $a(n) = m$. We shall similarly denote its preimage in $\simp/X$ for any simplicial anima $X$. We then have:

\begin{prop}\label{ac = simp/}
The adjunction $\iota_X \colon \simp/X \adj \ast \sslash\simp/X \cocolon \kappa_X$ descends to an equivalence
\[(\simp/X)[\mathrm{lv}^{-1}] \simeq (* \sslash \simp /X)[\mathrm{cc}^{-1}]\]
for any simplicial anima $X$.
\end{prop}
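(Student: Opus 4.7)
My plan is to apply the standard principle that an adjunction $L \colon C \adj D \cocolon R$ equipped with classes of morphisms $W_C \subseteq C$ and $W_D \subseteq D$ descends to an adjoint equivalence $C[W_C^{-1}] \simeq D[W_D^{-1}]$, provided $L(W_C) \subseteq W_D$, $R(W_D) \subseteq W_C$, and the components of the unit (resp.\ counit) lie in $W_C$ (resp.\ $W_D$). The task thus reduces to checking these four conditions for $\iota_X \dashv \kappa_X$ with the classes $\mathrm{lv}$ and $\mathrm{cc}$.

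The unit $\id \Rightarrow \kappa_X \iota_X$ is the identity, as $\kappa_X\iota_X(n,x) = \kappa_X([n],n,x) = ([n],([n] \subseteq [n])^*x) = (n,x)$. The counit at $([m],l,y)$ is the morphism $([l],l,([l] \subseteq [m])^*y) \to ([m],l,y)$ induced by the inclusion $[l] \hookrightarrow [m]$; its image in $\ast\sslash\simp$ sends the marked point $l$ to $l$, and so by the criterion recalled in \cref{observationac=lax} it lies in $\mathrm{cc}$. The same criterion settles the other two conditions by inspection. For a morphism $a \colon (n,x) \to (m,y)$ in $\simp/X$, the image $\iota_X(a) \colon ([n],n,x) \to ([m],m,y)$ is cocartesian if and only if $a(n) = m$, which is exactly the condition $a \in \mathrm{lv}$. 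Conversely, if $a \colon ([n],k,x) \to ([m],l,y)$ is cocartesian, meaning $a(k) = l$, then $\kappa_X(a)$ is the restriction of $a$ to the initial $k+1$ vertices, which sends the last vertex $k$ to $l$ and so lies in $\mathrm{lv}$.

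With all four conditions verified, the general principle yields the desired equivalence of localisations. There is no serious obstacle here; the only subtlety worth flagging is that cocartesianness in $\ast\sslash\simp/X \to \simp/X$ can be tested after projecting down to $\ast\sslash\simp \to \simp$, which is immediate from the pullback square defining $\ast\sslash\simp/X$ and was already observed in the lead-up to the proposition.
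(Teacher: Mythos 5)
Your proof is correct and follows essentially the same route as the paper: both arguments descend the adjunction by checking that $\iota_X$ sends $\mathrm{lv}$ into $\mathrm{cc}$ and $\kappa_X$ sends $\mathrm{cc}$ into $\mathrm{lv}$, and then observe that the unit is already an equivalence while the counit, consisting of cocartesian edges, becomes one after localisation. Your write-up just makes the pointwise verifications more explicit than the paper does.
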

\begin{proof}
    We first observe that the adjunction descends to an adjunction, since $\iota$ clearly takes last vertex maps to cocartesian ones, and $\kappa$ does the converse. The unit and counit of the resulting adjunctions are induced by the original ones. As the image of an equivalence the unit is thus still an equivalence, and the counit becomes an equivalence in the localisation as it consists of cocartesian edges.
\end{proof}

We obtain the following very useful description of associated categories: 

\begin{cor}\label{cor:localisation}
    For every simplicial anima $X$, we have an equivalence
    \[\ac(X) \simeq (\simp/X)[\mathrm{lv}^{-1}],\]
    natural in the input.
\end{cor}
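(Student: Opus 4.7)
My plan is to simply splice together the two immediately preceding results. \cref{observationac=lax} identifies $\ac(X)$ with the localisation $(\ast\sslash\simp/X)[\mathrm{cc}^{-1}]$, and \cref{ac = simp/} provides an equivalence $(\simp/X)[\mathrm{lv}^{-1}] \simeq (\ast\sslash\simp/X)[\mathrm{cc}^{-1}]$ via the adjunction $\iota_X \dashv \kappa_X$. Composing gives the desired equivalence
\[
    \ac(X) \;\simeq\; (\ast\sslash\simp/X)[\mathrm{cc}^{-1}] \;\simeq\; (\simp/X)[\mathrm{lv}^{-1}].
\]

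The remaining work is naturality in $X$. For this I would observe that all three terms assemble into functors $\sAn \to \Cat$. The assignments $X \mapsto \simp/X$ and $X \mapsto \ast\sslash\simp/X$ are functorial by pullback along the universal right fibration $\ast\sslash\simp \to \simp$ and the universal cocartesian fibration out of $\Cat$ as recalled at the start of this section; a morphism $X \to Y$ induces functors $\simp/X \to \simp/Y$ and $\ast\sslash\simp/X \to \ast\sslash\simp/Y$ which, since cocartesian and last-vertex edges are detected after projection to $\simp$, preserve the marked classes and so descend to the localisations. The identification of $\ac(X)$ with $(\ast\sslash\simp/X)[\mathrm{cc}^{-1}]$ is natural in $X$ because it packages the pointwise formula for left Kan extension with the universal colimit-as-localisation description, both of which are natural in the indexing data. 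Finally, $\iota$ and $\kappa$ are fixed functors $\simp \leftrightarrows \ast\sslash\simp$, and the unit/counit of $\iota_X \dashv \kappa_X$ are pulled back from those of $\iota \dashv \kappa$, so the equivalence in \cref{ac = simp/} is manifestly natural in $X$.

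I do not anticipate any real obstacle here: the heavy lifting is already packaged in \cref{observationac=lax} and \cref{ac = simp/}, and the naturality statement amounts to unwinding how the (co)cartesian-fibration pullbacks involved depend functorially on $X$.
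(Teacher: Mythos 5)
Your proposal is correct and matches the paper's (implicit) argument exactly: the corollary is stated without proof precisely because it follows by composing Observation~\ref{observationac=lax} with Proposition~\ref{ac = simp/}, and your naturality discussion fills in the routine details in the intended way.
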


\begin{rem}\label{ac literature}
\begin{enumerate}
    \item
For a simplicial set $X$ this result is essentially due to Joyal: He showed that the composite of the last vertex map $\mathcal N(\simp/X) \rightarrow X$ of simplicial sets with an inner anodyne to a quasi-category $\widetilde X$ is always a localisation (see e.g.\ \cite[Theorem 1.3]{Stevenson} for a direct proof). 
To translate between these versions of the statement, one uses \cref{ac of sSet}.
\item
    Haugseng proved a version of Corollary \ref{cor:localisation} for complete Segal animae $X$ in \cite[Proposition 2.12]{haugseng}, and our deduction of it from \cref{observationac=lax} is essentially the same as his argument.
\item The full statement of Corollary \ref{cor:localisation} also recently appeared as \cite[Corollary 2.9]{segalification}, but the proof there seems to crucially rely on the equivalence between categories and complete Segal animae.
\end{enumerate}
\end{rem}

Again, we describe the unit in terms of this equivalence. To this end denote by 
\[\mu_n \colon [n] \longrightarrow \simp/\Delta^n, \quad i \mapsto ([i], [i] \subseteq [n])\]
the composite $\kappa_{\Delta^n} \circ \lambda_n$. 

\begin{cor}\label{unitident}
    The naturality square of the unit, which we described in \cref{lemma unit}, also factors as
    % https://q.uiver.app/#q=WzAsOCxbMCwwLCJcXEhvbV9cXHNBbihcXERlbHRhXm0sWCkiXSxbMSwwLCJcXEhvbV97XFxDYXQvXFxzaW1wfShcXHNpbXAvXFxEZWx0YV5tLCBcXHNpbXAvWCkiXSxbMiwwLCJcXEZ1bihbbV0sXFxzaW1wL1gpXntcXG1hdGhybXtsdn19Il0sWzAsMSwiXFxIb21fXFxzQW4oXFxEZWx0YV5uLFgpIl0sWzEsMSwiXFxIb21fe1xcQ2F0L1xcc2ltcH0oXFxzaW1wL1xcRGVsdGFebiwgXFxzaW1wL1gpIl0sWzIsMSwiXFxGdW4oW25dLFxcc2ltcC9YKV57XFxtYXRocm17bHZ9fSJdLFszLDAsIlxcSG9tX1xcQ2F0KFttXSwgXFxhYyhYKSkiXSxbMywxLCJcXEhvbV9cXENhdChbbl0sIFxcYWMoWCkpIl0sWzAsM10sWzMsNCwiXFxzaW1wLy0iXSxbMCwxLCJcXHNpbXAvLSJdLFsxLDRdLFsxLDIsIlxcbXVfbl4qIl0sWzIsNV0sWzQsNSwiXFxtdV9tXioiLDJdLFs0LDIsIihcXGthcHBhX3tcXERlbHRhXm19XFxtYXRocm17QkN9X2QpXioiLDEseyJsZXZlbCI6Mn1dLFs1LDddLFsyLDZdLFs2LDddXQ==
    \[\begin{tikzcd}
    	{\Hom_\sAn(\Delta^m,X)} & {\Hom_{\Cat/\simp}(\simp/\Delta^m, \simp/X)} & {\Fun([m],\simp/X)^{\mathrm{lv}}} & {\Hom_\Cat([m], \ac(X))} \\
    	{\Hom_\sAn(\Delta^n,X)} & {\Hom_{\Cat/\simp}(\simp/\Delta^n, \simp/X)} & {\Fun([n],\simp/X)^{\mathrm{lv}}} & {\Hom_\Cat([n], \ac(X))}
    	\arrow["{\simp/-}", from=1-1, to=1-2]
    	\arrow[from=1-1, to=2-1]
    	\arrow["{\mu_n^*}", from=1-2, to=1-3]
    	\arrow[from=1-2, to=2-2]
    	\arrow[from=1-3, to=1-4]
    	\arrow[from=1-3, to=2-3]
    	\arrow[from=1-4, to=2-4]
    	\arrow["{\simp/-}", from=2-1, to=2-2]
    	\arrow["{(\kappa_{\Delta^m}\mathrm{BC}_d)^*}"{description}, Rightarrow, from=2-2, to=1-3]
    	\arrow["{\mu_m^*}"', from=2-2, to=2-3]
    	\arrow[from=2-3, to=2-4]
    \end{tikzcd}\]
    Here the superscript $\mathrm{lv}$ denotes the wide subcategory of $\Fun([-],\simp/X)$ spanned by those transformations which consist of last vertex maps, so that they are inverted by postcomposition with $\simp/X \to \ac(X)$.
\end{cor}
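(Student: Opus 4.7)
The plan is to transport the description of the unit from \cref{lemma unit} along the adjunction $\iota_X \dashv \kappa_X$ recalled just before \cref{ac = simp/}. Since the outer corners and vertical maps of the two diagrams already coincide -- both verticals are restriction along $d$ -- and the inner 2-cells exist only to witness that the outer rectangles are honest squares (all entries being animae), it suffices to match the horizontal composites coherently.

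The central observation is that, since $\mu_n = \kappa_{\Delta^n} \circ \lambda_n$ by definition, the counit of $\iota_{\Delta^n} \dashv \kappa_{\Delta^n}$ evaluated at $\lambda_n$ yields a natural transformation
\[\varepsilon_n \colon \iota_{\Delta^n} \circ \mu_n \Longrightarrow \lambda_n\]
of functors $[n] \to \ast\sslash\simp/\Delta^n$. Its component at $i \in [n]$ is the morphism $([i], i, [i] \hookrightarrow [n]) \to ([n], i, \id_{\Delta^n})$ induced by the inclusion $[i] \hookrightarrow [n]$, which preserves the marked vertex $i$ and is therefore cocartesian for $\ast\sslash\simp \to \simp$.

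For each $\sigma \colon \Delta^n \to X$, let $F_\sigma \colon \simp/\Delta^n \to \simp/X$ and $G_\sigma \colon \ast\sslash\simp/\Delta^n \to \ast\sslash\simp/X$ denote the functors obtained by pulling back the respective right fibrations along $\sigma$. Naturality of $\iota$ in $X$ gives $G_\sigma \circ \iota_{\Delta^n} = \iota_X \circ F_\sigma$, since both send $([k], x)$ to $([k], k, \sigma \circ x)$. Whiskering $\varepsilon_n$ with $G_\sigma$ then produces a natural transformation
\[\iota_X \circ F_\sigma \circ \mu_n \;=\; G_\sigma \circ \iota_{\Delta^n} \circ \mu_n \;\Longrightarrow\; G_\sigma \circ \lambda_n,\]
which is pointwise cocartesian because $G_\sigma$, as a morphism over $\ast\sslash\simp$, preserves cocartesian arrows. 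Post-composing with the localisation $\ast\sslash\simp/X \to \ac(X)$ and invoking \cref{ac = simp/} to identify $\iota_X$ with the equivalence $(\simp/X)[\mathrm{lv}^{-1}] \xrightarrow{\sim} (\ast\sslash\simp/X)[\mathrm{cc}^{-1}]$, this transformation becomes an equivalence in $\Fun([n], \ac(X))$. Hence the top rows of both diagrams yield the same map $X_n \to \Hom_\Cat([n], \ac(X))$; applying the same argument at $m$ and using naturality in $d$ shows the two outer squares coincide.

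The main point requiring care is the naturality in $d$: one needs that $\kappa_{\Delta^m}\mathrm{BC}_d$ is well-defined as a 2-cell $d_* \circ \mu_n \Rightarrow \mu_m \circ d$, which follows from the readily-checked identity $\kappa_{\Delta^m} \circ d_* = d_* \circ \kappa_{\Delta^n}$ on $\ast\sslash\simp/\Delta^n$. Granted this, the counit $\varepsilon$ assembles into a coherent equivalence between the two lax inner squares, and hence between the outer squares of \cref{lemma unit} and \cref{unitident}.
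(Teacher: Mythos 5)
Your argument is correct and rests on exactly the same mechanism as the paper's proof: the adjunction $\iota_X \dashv \kappa_X$ from \cref{ac = simp/}, the fact that its counit consists of cocartesian edges, and the naturality of $\iota/\kappa$ in the simplicial anima. The only difference is organisational. The paper works with $\kappa$: it observes that the localisation $\ast\sslash\simp/X \to \ac(X)$ factors through $\kappa_X$, uses naturality of $\kappa$ to trade $(\kappa_X)_\ast \circ (\ast\sslash\simp/-)$ for $(\kappa_{\Delta^m})^\ast \circ (\simp/-)$, and concludes from the identity $\kappa_{\Delta^m}\circ\lambda_m = \mu_m$. This produces a literal factorisation of the diagram from \cref{lemma unit}, so no new homotopies are introduced and the compatibility with $\mathrm{BC}_d$ is automatic. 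You instead work with $\iota$, whiskering the counit $\iota_{\Delta^n}\circ\mu_n \Rightarrow \lambda_n$ with $G_\sigma$ and inverting the resulting pointwise-cocartesian transformation in $\ac(X)$; this is precisely the mate of the paper's argument, but it identifies the two top rows only up to a specified homotopy, which is why you then have to address (and, in your last paragraph, only gesture at) the coherence of these homotopies over $d$. That final step does go through — the relevant homotopies all come from the single counit of $\iota_X \dashv \kappa_X$ by naturality, and the outer rectangles are diagrams of animae — but the paper's $\kappa$-sided formulation is worth internalising because it makes this point disappear entirely.
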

\begin{proof}
    Consider the diagram preceding \cref{lemma unit}.
    The two right hand horizontal arrows are given by postcomposing with $\ast\sslash\simp/X \to \ac(X)$, which factors through $\kappa_X\colon \ast\sslash\simp/X \to \simp/X$.
    If we already postcompose with $\kappa_X$ in the two left horizontal arrows, we obtain an factorisation of the unit as:
% https://q.uiver.app/#q=WzAsOCxbMiwwLCJcXEhvbV97XFxDYXQvXFxzaW1wfShcXGFzdFxcc3NsYXNoXFxzaW1wL1xcRGVsdGFebSwgXFxzaW1wL1gpIl0sWzMsMCwiXFxGdW4oW21dLFxcc2ltcC9YKV57XFxtYXRocm17bHZ9fSJdLFsyLDEsIlxcSG9tX3tcXENhdC9cXHNpbXB9KFxcYXN0XFxzc2xhc2hcXHNpbXAvXFxEZWx0YV5uLCBcXHNpbXAvWCkiXSxbMywxLCJcXEZ1bihbbl0sXFxzaW1wL1gpXntcXG1hdGhybXtsdn19Il0sWzAsMCwiXFxIb21fXFxzQW4oXFxEZWx0YV5tLFgpIl0sWzAsMSwiXFxIb21fXFxzQW4oXFxEZWx0YV5uLFgpIl0sWzQsMCwiXFxIb21fXFxDYXQoW21dLFxcYWMoWCkpIl0sWzQsMSwiXFxIb21fXFxDYXQoW25dLFxcYWMoWCkpIl0sWzAsMSwiXFxsYW1iZGFfbV4qIl0sWzIsMywiXFxsYW1iZGFfbl4qIiwyXSxbNCw1XSxbMCwyXSxbMSwzXSxbMiwxLCJcXG1hdGhybXtCQ31fZF4qIiwxLHsic2hvcnRlbiI6eyJzb3VyY2UiOjEwLCJ0YXJnZXQiOjEwfSwibGV2ZWwiOjJ9XSxbNSwyLCIoXFxrYXBwYV9YKV8hIFxcY2lyYyAoXFxhc3RcXHNzbGFzaFxcc2ltcC8tKSJdLFs0LDAsIihcXGthcHBhX1gpXyEgXFxjaXJjIChcXGFzdFxcc3NsYXNoXFxzaW1wLy0pIl0sWzMsN10sWzEsNl0sWzYsN11d
\[\begin{tikzcd}
	{\Hom_\sAn(\Delta^m,X)} && {\Hom_{\Cat/\simp}(\ast\sslash\simp/\Delta^m, \simp/X)} & {\Fun([m],\simp/X)^{\mathrm{lv}}} & {\Hom_\Cat([m],\ac(X))} \\
	{\Hom_\sAn(\Delta^n,X)} && {\Hom_{\Cat/\simp}(\ast\sslash\simp/\Delta^n, \simp/X)} & {\Fun([n],\simp/X)^{\mathrm{lv}}} & {\Hom_\Cat([n],\ac(X)).}
	\arrow["{(\kappa_X)_* \circ (\ast\sslash\simp/-)}", from=1-1, to=1-3]
	\arrow[from=1-1, to=2-1]
	\arrow["{\lambda_m^*}", from=1-3, to=1-4]
	\arrow[from=1-3, to=2-3]
	\arrow[from=1-4, to=1-5]
	\arrow[from=1-4, to=2-4]
	\arrow[from=1-5, to=2-5]
	\arrow["{(\kappa_X)_* \circ (\ast\sslash\simp/-)}", from=2-1, to=2-3]
	\arrow["{\mathrm{BC}_d^*}"{description}, shorten <=6pt, shorten >=6pt, Rightarrow, from=2-3, to=1-4]
	\arrow["{\lambda_n^*}"', from=2-3, to=2-4]
	\arrow[from=2-4, to=2-5]
\end{tikzcd}\]
    Because the functor $\kappa_Y\colon \ast\sslash\simp/Y \to \simp/Y$ is natural in $Y$,
    there is a commutative square
    \[\begin{tikzcd}
        \Hom_\sAn(\Delta^m, X) \ar[r, "{\simp/-}"] \ar[d, "{\ast\sslash\simp/-}"'] & 
        \Hom_{\Cat/\simp}(\simp/\Delta^m, \simp/X) \ar[d, "{(\kappa_{\Delta^m})^*}"] \\
        \Hom_{\Cat/(\ast\sslash\simp)}(\ast\sslash\simp/\Delta^m, \ast\sslash\simp/X) \ar[r, "{(\kappa_X)_*}"] &
        \Hom_{\Cat/\simp}(\ast\sslash\simp/\Delta^m, \simp/X)
    \end{tikzcd}\]
    and a similar square for $\Delta^n$, which together form a commutative cube.
    Hence the left square in the first diagram of the proof factors as
% https://q.uiver.app/#q=WzAsNixbMiwwLCJcXEhvbV97XFxDYXQvXFxzaW1wfShcXGFzdFxcc3NsYXNoXFxzaW1wL1xcRGVsdGFebSwgXFxzaW1wL1gpIl0sWzIsMSwiXFxIb21fe1xcQ2F0L1xcc2ltcH0oXFxhc3RcXHNzbGFzaFxcc2ltcC9cXERlbHRhXm4sIFxcc2ltcC9YKSJdLFswLDAsIlxcSG9tX1xcc0FuKFxcRGVsdGFebSxYKSJdLFswLDEsIlxcSG9tX1xcc0FuKFxcRGVsdGFebixYKSJdLFsxLDEsIlxcSG9tX3tcXENhdC9cXHNpbXB9KFxcc2ltcC9cXERlbHRhXm4sIFxcc2ltcC9YKSJdLFsxLDAsIlxcSG9tX3tcXENhdC9cXHNpbXB9KFxcc2ltcC9cXERlbHRhXm0sIFxcc2ltcC9YKSJdLFsyLDNdLFswLDFdLFszLDQsIlxcc2ltcC8tIl0sWzQsMSwiXFxrYXBwYV97XFxEZWx0YV5ufV4qIl0sWzIsNSwiXFxzaW1wLy0iXSxbNSwwLCJcXGthcHBhX3tcXERlbHRhXm19XioiXSxbNSw0XV0=
\[\begin{tikzcd}
	{\Hom_\sAn(\Delta^m,X)} & {\Hom_{\Cat/\simp}(\simp/\Delta^m, \simp/X)} & {\Hom_{\Cat/\simp}(\ast\sslash\simp/\Delta^m, \simp/X)} \\
	{\Hom_\sAn(\Delta^n,X)} & {\Hom_{\Cat/\simp}(\simp/\Delta^n, \simp/X)} & {\Hom_{\Cat/\simp}(\ast\sslash\simp/\Delta^n, \simp/X).}
	\arrow["{\simp/-}", from=1-1, to=1-2]
	\arrow[from=1-1, to=2-1]
	\arrow["{\kappa_{\Delta^m}^*}", from=1-2, to=1-3]
	\arrow[from=1-2, to=2-2]
	\arrow[from=1-3, to=2-3]
	\arrow["{\simp/-}", from=2-1, to=2-2]
	\arrow["{\kappa_{\Delta^n}^*}", from=2-2, to=2-3]
\end{tikzcd}\]
     Inserting this in the diagram, we obtain the claimed factorisation because $\kappa_{\Delta^m} \circ \lambda_m = \mu_m$ and similarly for $n$.
\end{proof}

The transformation $\kappa_{\Delta^m}\mathrm{BC}_d \colon d_* \circ \mu_n \Rightarrow \mu_m \circ d$ of functors $[n] \rightarrow \simp/\Delta^m$ is explicitly given by
\[i \longmapsto \left(d_*\mu_n(i) = ([i],[i] \subseteq [n] \xrightarrow{d} [m]) \xrightarrow{d} ([d(i)], [d(i)] \subseteq [m]) = \mu_m d(i)\right).\]
In particular, this transformation is invertible whenever $d \colon [i] \rightarrow [d(i)]$ is an isomorphism in $\simp$ for all $0 \leq i \leq n$, that is if $d$ is the inclusion of an initial segment, e.g.\ for $d = d_{n+1} \colon [n] \rightarrow [n+1]$; we will use this in Lemma \ref{unit-cartesian} below.

The corollary in particular gives:
\begin{obs}\label{obs:unit-factors}
In simplicial degree $0$ the unit of the adjunction $\ac \dashv \nerve$ is given by factoring
     \[X_0 \simeq [0]/X \subset \simp/X \too (\simp/ X)[\mathrm{lv}^{-1}] \simeq \ac(X)\]
     through $\core(\ac(X)) \simeq \nerve_0(\ac(X))$, naturally in $X \in \sAn$.
\end{obs}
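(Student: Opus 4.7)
The plan is to apply \cref{unitident} with $m = n = 0$, which expresses the unit in simplicial degree $0$ as the composite
\[X_0 \simeq \Hom_\sAn(\Delta^0, X) \xrightarrow{\simp/-} \Hom_{\Cat/\simp}(\simp/\Delta^0, \simp/X) \xrightarrow{\mu_0^*} \Fun([0], \simp/X)^{\mathrm{lv}} \too \Hom_\Cat([0], \ac(X)),\]
and then to unwind each arrow. First, since $[0]$ is terminal in $\Cat$, we have $\Hom_\Cat([0], -) = \core$, so the target reads $\Hom_\Cat([0], \ac(X)) = \core(\ac(X)) = \nerve_0(\ac(X))$, correctly matching the statement.

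Next I would track a point $x \in X_0$ through the composite. By definition $\mu_0 \colon [0] \to \simp/\Delta^0$ picks out the terminal object $([0], \id_{[0]})$, and the functor $\simp/\Delta^0 \to \simp/X$ induced by $x$ sends $([k], \alpha)$ to $([k], x \circ \alpha)$. Hence the first two arrows send $x$ to the object $([0], x) \in \simp/X$, i.e.\ to the image of $x$ under the canonical inclusion $X_0 \simeq [0]/X \hookrightarrow \simp/X$. The final arrow is postcomposition with the localisation $\simp/X \to \ac(X) = (\simp/X)[\mathrm{lv}^{-1}]$, combined with the core identification above. Composing yields exactly the claimed factorisation through $\core(\ac(X))$.

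Naturality in $X$ is automatic: $\mu_0$ does not depend on $X$, while $\simp/-$, the localisation, and the core are all functorial in $X$. I do not expect any real obstacle; the observation is essentially a direct unpacking of \cref{unitident}, with all the substantive work carried out in \cref{ac = simp/} and \cref{unitident}.
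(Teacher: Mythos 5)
Your proposal is correct and matches the paper's own derivation: the paper states this observation as an immediate specialisation of \cref{unitident} (taking $n=m=0$, using that $\mu_0$ picks out $([0],\id)$ and that $\Hom_\Cat([0],-) = \core$), which is exactly the unwinding you carry out. No issues.
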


\begin{cor}\label{cor1}
    The map $\pi_0(X_0) \rightarrow \pi_0(\nerve_0(\ac(X))) = \pi_0(\core\  \ac(X))$ induced by the unit of the adjunction $\ac \dashv \nerve$ is surjective for every $X \in \sAn$.
\end{cor}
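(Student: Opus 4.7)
The plan is to leverage \cref{obs:unit-factors}, which identifies the unit in simplicial degree $0$ with the composition $X_0 \simeq [0]/X \subset \simp/X \to (\simp/X)[\mathrm{lv}^{-1}] \simeq \ac(X)$ followed by the inclusion into the core. Surjectivity on $\pi_0$ then reduces to the purely categorical statement that every object of $\ac(X)$ is isomorphic to one coming from $X_0$.

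To verify this, I would fix an arbitrary object $([n], x) \in \simp/X$ and produce an explicit equivalence in $\ac(X)$ between it and an object in the image of $X_0$. Concretely, the inclusion of the last vertex $\delta \colon [0] \to [n]$, $0 \mapsto n$, lies in $\mathrm{lv}$ by definition, and with $x' := \delta^* x \in X_0$ it upgrades to a morphism $([0], x') \to ([n], x)$ in $\simp/X$ whose image in $\simp$ is a last vertex map. Upon passage to $(\simp/X)[\mathrm{lv}^{-1}] \simeq \ac(X)$ this morphism becomes an equivalence, witnessing that $([n], x)$ is isomorphic to the image of $x'$ under $X_0 \hookrightarrow \simp/X \to \ac(X)$. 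Since every object of the localisation is represented by some $([n], x)$, surjectivity on $\pi_0(\core \ac(X))$ follows.

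I do not expect any real obstacle; granted \cref{cor:localisation} and \cref{obs:unit-factors}, the corollary is essentially immediate because last vertex maps provide a canonical contraction of each simplex to its last vertex.
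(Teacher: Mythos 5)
Your proposal is correct and follows essentially the same route as the paper's proof: both invoke \cref{obs:unit-factors}, use that the localisation $\simp/X \to \ac(X)$ is essentially surjective, and use the last vertex map $([0], n^*x) \to ([n], x)$ to connect an arbitrary object to one in the image of $X_0$.
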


\begin{proof}
From the previous observation we have a commutative diagram
    \[\begin{tikzcd}
     &   X_0 \ar[rd] \ar[ld] & \\
     \simp/X \ar[rr] && \ac(X)
    \end{tikzcd}\]
   involving the unit as the right arrow. But as a localisation the horizontal map gives a surjection $\pi_0(\core(\simp/X)) \to \pi_0(\core(\ac(X)))$ and for every object $([n], x \in X_n)$ of $\simp/X$ there is the last vertex map $n \colon [0] \rightarrow [n]$, which witnesses that $([n], x \in X_n) \sim ([0], n^*x \in X_0)$ in $\pi_0(\core(\ac(X)))$, which lies in the image of the left map.
\end{proof}

\subsection{Morphism animae in associated categories}
Our eventual goal will be to check that for $X$ a Segal anima the map 
    \[\Hom_X(x, y) \too \Hom_{\nerve(\ac(X))}(x, y) \simeq \Hom_{\ac(X)}(x, y)\]
induced by the unit is an equivalence.

To this end we recall a statement that forms part of the calculus of fractions for localisations; we originally learned this from Cisinski's book \cite{Cisinski}, see also \cite{hyperhyper}.

\begin{lem}\label{hominloc}
Let $C$ be a category and $W$ a wide subcategory. If for some $d \in C$ the functor
\[C^{\mathrm{op}} \longrightarrow \An, \quad c \longmapsto \ \mid c/C \times_C d/W \mid\]
inverts the morphisms of $W$, then the canonical map
\[|c/C \times_C d/W|\ \longrightarrow  p(c)/C[W^{-1}] \times_{C[W^{-1}]} p(d)/\core(C[W^{-1}])  \simeq \Hom_{C[W^{-1}]}(p(c),p(d)),\]
  is an equivalence, where $p \colon C \rightarrow C[W^{-1}]$ is the localisation.
\end{lem}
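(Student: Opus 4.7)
My plan is to identify both sides of the canonical map with $p^* y_{p(d)}$ inside $\Fun(C^{\mathrm{op}}, \An)$, where $y_{p(d)} := \Hom_{C[W^{-1}]}(-, p(d))$ is the representable and $p^*$ denotes restriction along $p\colon C \to C[W^{-1}]$.

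First I would express $F(c) := |c/C \times_C d/W|$ as a colimit of representables in $\Fun(C^{\mathrm{op}}, \An)$. Since $c/C \to C$ is the left fibration classifying $\Hom_C(c, -)$, its pullback $c/C \times_C d/W \to d/W$ is a left fibration whose classifying functor sends $(d \xrightarrow{w} x) \mapsto \Hom_C(c, x)$. The Grothendieck construction, applied naturally in $c$, then identifies $F$ with $\colim_{(d \xrightarrow{w} x) \in d/W} y_x$, writing $y_x := \Hom_C(-, x)$ for the representable.

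Applying the left adjoint $p_!\colon \Fun(C^{\mathrm{op}}, \An) \to \Fun(C[W^{-1}]^{\mathrm{op}}, \An)$ to this presentation and using $p_! y_x \simeq y_{p(x)}$ yields $p_! F \simeq \colim_{d/W} y_{p(x)}$. Because $p$ inverts $W$, the diagram $(d \xrightarrow{w} x) \mapsto y_{p(x)}$ sends every morphism of $d/W$ to an equivalence and hence factors through the groupoidification $|d/W|$. As $W$ is wide, $(d, \id_d)$ is initial in $d/W$, so $|d/W| \simeq *$ and $p_! F \simeq y_{p(d)}$. The hypothesis that $F$ inverts $W$ is precisely the statement that the unit $F \to p^* p_! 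F$ is an equivalence, so combining gives $F \simeq p^* y_{p(d)} = \Hom_{C[W^{-1}]}(p(-), p(d))$. That this equivalence coincides with the canonical map of the statement is a short Yoneda check, since both carry the distinguished element $(d, \id_d, \id_d) \in F(d)$ to $\id_{p(d)}$.

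The step I expect to require the most care is promoting the pointwise Grothendieck equivalence $F(c) \simeq \colim_{d/W} \Hom_C(c, x)$ to an equivalence of presheaves of $c$; this reduces to naturality of straightening under base change of the left fibration $c/C \to C$ along $d/W \to C$, which is standard but ought to be invoked explicitly.
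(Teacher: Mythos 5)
Your proposal is correct and follows essentially the same route as the paper's proof: write $F = |{-}/C \times_C d/W|$ as $\colim_{d/W} y_x$ via unstraightening, push forward along the localisation using $p^{\mathrm{op}}_! y_x \simeq y_{p(x)}$ and the contractibility of $|d/W|$ (initial object $\id_d$), and then use that the hypothesis makes $F$ descend along $p^{\mathrm{op}}$, i.e.\ that the unit $F \to (p^{\mathrm{op}})^* p^{\mathrm{op}}_! F$ is an equivalence. The only cosmetic difference is that you flag the naturality-in-$c$ of the unstraightening identification and the final Yoneda comparison explicitly, which the paper leaves implicit.
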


\begin{proof}
We repeat (and simplify) the proof from \cite{HLS}.

We first claim
\[|c/C \times_C d/W| \simeq \colim_{\substack{g \colon d \rightarrow e \\ \in d/W}} \Hom_{C}(c,e):\]
The colimit is given as the realisation of the unstraightening of the defining functor
\[d/W \xrightarrow{t} C \xrightarrow{\Hom_{C}(c,-)} \An\]
which is indeed $c/C \times_C d/W$.

We will argue by using left Kan extension along the opposite of the localisation map $p \colon C \to C[W^{-1}]$.
First, note that left Kan extension of the representable functor $\Hom_C(-, x)$ along $p^\op$ has to be the representable $\Hom_{C[W^{-1}]}(-, p(x))$ by the Yoneda lemma.

Using this we compute
\begin{align*}
p^{\mathrm{op}}_!(|-/C \times_C d/W|) &\simeq p^{\mathrm{op}}_!\left(\colim_{\substack{g \colon d \rightarrow e \\ \in d/W}} \Hom_{C}(-,e)\right)  \\
& \simeq \colim_{\substack{g \colon d \rightarrow e \\ \in d/W}} p^{\mathrm{op}}_! \Hom_{C}(-,e) 
\simeq \colim_{\substack{g \colon d \rightarrow e \\ \in d/W}} \Hom_{C[W^{-1}]}(-,p(e)) .
\end{align*}
The functor $\Hom_{C[W^{-1}]}(-,p(e))$ over which we take the colimit inverts all morphisms in $d/W$ (by two-out-of-three for equivalences in $C[W^{-1}]$),
so we may equivalently take the colimit over $|d/W|$.
But $d/W$ has $\mathrm{id}_d$ as an initial object, so $|d/W|$ is contractible and we can evaluate the colimit as $\Hom_{C[W^{-1}]}(-, p(d))$.

If now $|-/C \times_C d/W| \colon C^{\mathrm{op}} \rightarrow \An$ inverts $W$, then its left Kan extension along $p^{\mathrm{op}}$ is simply given by descending it 
(the counit $p^\op_!(p^\op)^* \to \id$ is an equivalence because $(p^\op)^*$ is fully faithful),
so we learn
\[|-/C \times_C d/W| \simeq (p^\mathrm{op})^*p^\mathrm{op}_!(|-/C \times_C d/W|) \simeq (p^\mathrm{op})^*\Hom_{C[W^{-1}]}(-,p(d))\]
which we wanted to show.
\end{proof}

We can now apply this to $C= \simp/X$ and $W = (\simp/X)^{\mathrm{lv}}$, the wide subcategory consisting of the last vertex maps. To this end consider for $x \in X_n$ and $y \in X_0$ the pullback
\[\begin{tikzcd}
         P_X(x,y) \ar[r] \ar[d] & X_{n+1} \ar[d,"{(}d_{n+1}{,}{(}n+1{)}^*{)}"] \\
        \ast \ar[r,"{(}x{,}y{)}"] & X_n \times X_0.
    \end{tikzcd}\]
Then for objects $c = ([n], x \in X_n)$ and $d = ([0], y \in X_0)$ of $\simp/X$ we find that the fibre of 
\[\mathrm{fgt}_{c,d} \colon c/(\simp/X) \times_{\simp/X} d/(\simp/X)^{\mathrm{lv}} \longrightarrow [n]/\simp \times_\simp [0]/\simp^{\mathrm{lv}} \simeq [n]/\simp \times_\simp \simp^{\mathrm{lv}}\]
over $d_{n+1}\colon [n]\rightarrow [n+1]$ is exactly $P_X(x,y)$: Commuting pullbacks this is the pullback of the fibres of the three constituent forgetful functors
    \[c/(\simp/X) \rightarrow [n]/\simp, \quad {\simp/X} \rightarrow \simp, \quad \text{and} \quad d/(\simp/X)^{\mathrm{lv}} \too \simp^{\rm lv}
    \]
over $d_{n+1}, [n+1]$ and $[n+1]$, respectively. In the first case we obtain the fibre of $d_{n+1} \colon X_{n+1} \rightarrow X_n$ over $x$, in the second case we get $X_{n+1}$ and in the final one we obtain the fibre of $(n+1)^* \colon X_{n+1} \rightarrow X_0$ over $y$. In total, we find that the fibre of $\mathrm{fgt}_{c,d}$ over $d_{n+1}$ is given by
\[(\{x\} \times_{X_n} X_{n+1}) \times_{X_{n+1}} (X_{n+1} \times_{X_0} \{y\}),\]
which indeed cancels to $P_X(x,y)$.

\begin{prop}\label{criterion verify}
    For a simplicial anima $X \in \sAn$, and objects $c = ([n], x \in X_n)$ and $d = ([0], y \in X_0)$ of $\simp/X$ the map 
    \[P_X(x,y) \longrightarrow |c/(\simp/X) \times_{\simp/X} d/(\simp/X)^{\mathrm{lv}}|\]
    just constructed is an equivalence. 
\end{prop}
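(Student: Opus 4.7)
The plan is to exhibit the fibre inclusion $i\colon P_X(x,y) \hookrightarrow E := c/(\simp/X) \times_{\simp/X} d/(\simp/X)^{\mathrm{lv}}$ as a section of a retraction $r\colon E \to P_X(x,y)$, together with a natural transformation $\epsilon\colon i \circ r \Rightarrow \mathrm{id}_E$. Since $r \circ i = \mathrm{id}_{P_X(x,y)}$ on the nose and the realisation of any natural transformation is an equivalence, this gives mutually inverse equivalences $|i|\colon P_X(x,y) \simeq |E|$, and the map in the proposition is by construction precisely $|i|$.

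The key structural input is that the base $B := [n]/\simp \times_\simp \simp^{\mathrm{lv}}$ has $([n+1], d_{n+1}\colon [n] \to [n+1])$ as an initial object. Indeed, for any $([m], f\colon [n] \to [m])$ the map $\tilde{f}\colon [n+1] \to [m]$ determined by $\tilde{f}|_{[n]} = f$ and $\tilde{f}(n+1) = m$ is the unique last-vertex-preserving extension of $f$, and hence the unique morphism in $B$ from $d_{n+1}$ to $([m], f)$. One then defines $r$ on objects by $([m], z_m, f, g) \mapsto \tilde{f}^* z_m$, which lands in $P_X(x,y)$ since $d_{n+1}^* \tilde{f}^* z_m = f^* z_m = x$ and $(n+1)^* \tilde{f}^* z_m = m^* z_m = y$. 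The component of $\epsilon$ at $([m], z_m, f, g)$ is the morphism $\tilde{f}\colon [n+1] \to [m]$, which defines a valid morphism in $E$ by the same equalities together with the last-vertex property $\tilde{f}(n+1) = m$; its naturality in $([m], f, g)$ reduces to the identity $\gamma \circ \tilde{f} = \widetilde{\gamma \circ f}$ for morphisms $\gamma$ in $B$.

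Homotopy-coherently, $r$ and $\epsilon$ are induced by the assignment $([m], f) \mapsto \tilde{f}$, which promotes to a functor $B \to \mathrm{Ar}(\simp)$ sending $d_{n+1}$ to $\mathrm{id}_{[n+1]}$; pulling back along this functor yields the required data. Alternatively, one first verifies that $E \to B$ is a right fibration (an iterated base change of the right fibration $\simp/X \to \simp$) and then appeals to the general principle that, for a right fibration $p\colon E \to B$ with initial object $b_0$, the fibre inclusion $E_{b_0} \hookrightarrow E$ is a fully faithful left adjoint — its right adjoint being cartesian transport along the unique morphisms out of $b_0$. In our case this recovers precisely the formula $([m], z_m, f, g) \mapsto \tilde{f}^* z_m$ above, and the counit is $\tilde{f}$.

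The main obstacle is the homotopy-coherent bookkeeping that makes $r$ and $\epsilon$ into a functor and a natural transformation of $\infty$-categories rather than ordinary categories; either route above dispatches it, the second being particularly clean since it reduces everything to the already-established right-fibration structure and the initiality of $d_{n+1}$ in $B$.
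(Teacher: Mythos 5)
Your proposal is correct and follows essentially the same route as the paper: identify $E \to [n]/\simp \times_\simp \simp^{\mathrm{lv}}$ as a right fibration over a base with initial object $d_{n+1}$, and deduce that the fibre inclusion over that object is a (left) adjoint, hence an equivalence on realisations. Your second, ``clean'' route is precisely the paper's argument, and your explicit formulas for the right adjoint and counit match the paper's description of cartesian transport along the unique maps out of $d_{n+1}$.
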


Let us defer the proof for a moment and draw the desired corollary. To this end note that for $X$ a Segal anima, we can enlarge the defining diagram by another cartesian square to
\[\begin{tikzcd}
         P_X(x,y) \ar[r] \ar[d] & X_{n+1} \ar[d,"{(}d_{n+1}{,}{(}n+1{)}^*{)}"] \ar[r,"e_{n+1}^*"] & X_1 \ar[d,"{(}d_1{,}d_0{)}"] \\
        \ast \ar[r,"{(}x{,}y{)}"] & X_n \times X_0  \ar[r,"n^* \times \mathrm{id}"] & X_0 \times X_0,
\end{tikzcd}\]
    to find equivalences $P_X(x,y) \simeq \Hom_X(n^*x,y)$, for all $x \in X_n$ and $y \in X_0$. Since this in particular shows that $P_X(x,y)$ only depends on the last vertex of $x$, we can apply \cref{hominloc} to find:

\begin{cor}\label{cor2}
    If $X$ is a Segal anima, then the maps 
    \[
        \Hom_X(x, y) \simeq P_X(x, y) 
        \too |c/(\simp/X) \times_{\simp/X} d/(\simp/X)^{\mathrm{lv}}|
        \xrightarrow{p} \hom_{(\simp/X)[\mathrm{lv}^{-1}]}(c,d) \simeq \hom_{\ac(X)}(x, y)
    \]
    are equivalences for $c=([0],x \in X_0), d=([0],y \in X_0) \in \simp/X$.
\end{cor}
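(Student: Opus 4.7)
The plan is to show that each of the four maps in the display is an equivalence. The first and last equivalences are essentially immediate: $\Hom_X(x,y) \simeq P_X(x,y)$ is the case $n = 0$ of the extended Segal pullback diagram immediately preceding the corollary (where $n^\ast = \id_{X_0}$ and the rightmost column collapses), and $\hom_{(\simp/X)[\mathrm{lv}^{-1}]}(c,d) \simeq \hom_{\ac(X)}(x,y)$ is \cref{cor:localisation} applied at $c = ([0], x)$, $d = ([0], y)$. The middle map from $P_X(x,y)$ to the realisation of the fibre-product slice is precisely the map of \cref{criterion verify}, hence an equivalence.

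The real content of the corollary is that the localisation map $p$ is an equivalence. For this I would invoke \cref{hominloc} with $C = \simp/X$, $W = (\simp/X)^{\mathrm{lv}}$, and $d = ([0], y)$. The hypothesis to verify is that the functor
\[F \colon (\simp/X)^{\mathrm{op}} \longrightarrow \An, \quad c \longmapsto |c/(\simp/X) \times_{\simp/X} d/(\simp/X)^{\mathrm{lv}}|\]
inverts last vertex morphisms. Reading \cref{criterion verify} naturally in $c = ([n], x \in X_n)$ yields $F(c) \simeq P_X(x, y)$, and for a Segal anima $X$ the second cartesian square appearing just before the statement produces a further identification $P_X(x, y) \simeq \Hom_X(n^\ast x, y)$. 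In particular, $F(c)$ depends on $x$ only through its last vertex $n^\ast x \in X_0$. Since a morphism $a \colon ([n], x) \to ([m], x')$ in $\simp/X$ is a last vertex map precisely when $a(n) = m$ and $a^\ast x' = x$, we get
\[n^\ast x \simeq n^\ast a^\ast x' \simeq (a \circ n)^\ast x' \simeq m^\ast x',\]
so $F(a)$ is an equivalence. Thus \cref{hominloc} applies and $p$ is an equivalence, whence composing the four maps completes the proof.

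The main obstacle I foresee is the naturality in $c$ of the identification in \cref{criterion verify}: its construction as the fibre over $d_{n+1}$ depends on $n$, and one must ensure that the pointwise equivalences actually assemble into an equivalence of functors on $(\simp/X)^{\mathrm{op}}$ rather than just a collection of pointwise ones. Granted this piece of bookkeeping, everything else is a formal chaining of the four equivalences.
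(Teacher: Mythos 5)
Your proposal is correct and follows essentially the same route as the paper: the first, second and fourth maps are handled exactly as you say, and the localisation map $p$ is treated by verifying the hypothesis of \cref{hominloc} via the identification $F(c) \simeq P_X(x,y) \simeq \Hom_X(n^*x,y)$, which shows the functor depends only on the last vertex and hence inverts $\mathrm{lv}$. The naturality bookkeeping you flag is indeed the only delicate point, and the paper elides it in the same way.
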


\begin{proof}[Proof of Proposition \ref{criterion verify}]
    Consider the forgetful functor 
    \[
        \mathrm{fgt}_{c,d} \colon c/(\simp/X) \times_{\simp/X} d/(\simp/X)^{\mathrm{lv}}
        \too [n]/\simp \times_{\simp} [0]/\simp^{\rm lv} \simeq [n]/\simp \times_{\simp} \simp^{\rm lv},
    \]
    where the equivalence comes from the fact that $[0]$ is initial in $\simp^{\mathrm{lv}} $. We claim that it is a right fibration.
    
    In general, if $\pi\colon E \to B$ is a right fibration, then so is $e/E \to \pi(e)/B$ and right fibrations are closed under pullback in $\Ar(\Cat)$; both these claims are for example simple consequences of the characterisation of right fibrations as those functors $\pi$ for which $(\Ar(\pi),t) \colon \Ar(E) \rightarrow \Ar(B) \times_B E$ is an equivalence, see e.g.~\cite[Proposition 00TE]{Kerodon}.
    Because $\simp/X \to \simp$ is a right fibration, the first statement implies that all three constituents of the functor above are right fibrations, and the second statement then gives the claim.

    Now the category $[n]/\simp \times_{\simp} \simp^{\rm lv}$ is a wide subcategory of $[n]/\simp$ and one checks that it has $d_{n+1} \colon [n] \rightarrow [n+1]$ as an initial object. 
    But again generally, if $\pi \colon E \rightarrow B$ is a cartesian fibration and $\ast \in B$ is initial, then the inclusion $E_\ast \rightarrow E$ of the fibre over $\ast$ admits a right adjoint, given by sending an object $e \in E$ to the source of a cartesian lift of $\ast \rightarrow \pi(e)$ ending at $e$. In particular, one has $|E| \simeq |E_\ast|$.

    Combining this with the discussion preceding the statement, which identified $\mathrm{fgt}_{c,d}^{-1}(d_{n+1})$ with $P_X(x,y)$, we obtain the claim.
\end{proof}

\section{Proof of the main result}

Our next task is to show that the equivalence
\[\Hom_X(x,y) \too \Hom_{\ac(X)}(x,y) \simeq \Hom_{\nerve(\ac(X))}(x,y)\]
constructed in \cref{cor2} agrees with the map induced by the unit. We will use this in the following form:

\begin{lem}\label{unit-cartesian}
    The square induced by the unit
    \[\begin{tikzcd}
        X_1 \ar[d, "{(d_1, d_0)}"'] \ar[r] &
        \nerve_1(\ac(X)) \ar[d, "{(d_1, d_0)}"] \\
        X_0 \times X_0 \ar[r] &
        \nerve_0(\ac(X)) \times \nerve_0(\ac(X)) 
    \end{tikzcd}\]
    is cartesian for every Segal anima $X$.
\end{lem}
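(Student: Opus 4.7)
The plan is to take fibres in the square over each $(x,y) \in X_0 \times X_0$ and identify the induced map on fibres with the equivalence of \cref{cor2}. By \cref{obs:unit-factors}, $x$ and $y$ are sent to $\bar x := [([0], x)]$ and $\bar y := [([0], y)]$ in $\nerve_0(\ac(X)) = \core(\ac(X))$, so the fibres to compare are $\Hom_X(x, y)$ and $\Hom_{\ac(X)}(\bar x, \bar y)$.

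To pin down the induced fibre map, first apply \cref{unitident} with $n = 1$: a point $f \in X_1$ is sent to the functor $[1] \to \ac(X)$ obtained by localising $\mu_1^*(\simp/f) \colon [1] \to \simp/X$, which visibly sends $0 \mapsto ([0], x)$, $1 \mapsto ([1], f)$, and the non-identity arrow to the morphism $d_1 \colon ([0], x) \to ([1], f)$ in $\simp/X$. To identify the endpoints with $\bar x$ and $\bar y$, apply \cref{unitident} now to the two maps $d_1, d_0 \colon [0] \to [1]$: the Beck-Chevalley transformation $\kappa_{\Delta^1}\mathrm{BC}_{d_1}$ is invertible since $d_1$ is an initial segment inclusion (by the remark following \cref{unitident}), while $\kappa_{\Delta^1}\mathrm{BC}_{d_0}$ is the last vertex map $([0], y) \to ([1], f)$, which becomes invertible in $\ac(X)$. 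Hence the fibre map sends $f$ to the morphism of $\ac(X)$ represented by the zigzag $([0], x) \xrightarrow{d_1} ([1], f) \xleftarrow{d_0} ([0], y)$ in $\simp/X$.

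Now unwind \cref{cor2}, applying \cref{hominloc} to $c = ([0], x)$ and $d = ([0], y)$: an element $f \in \Hom_X(x,y) \simeq P_X(x,y)$ corresponds to the point of $|c/(\simp/X) \times_{\simp/X} d/(\simp/X)^{\mathrm{lv}}|$ lying in the fibre of $\mathrm{fgt}_{c,d}$ over the initial object $d_1 \colon [0] \to [1]$, which is precisely the same zigzag $([0], x) \xrightarrow{d_1} ([1], f) \xleftarrow{d_0} ([0], y)$. Since both descriptions localise to the same morphism in $\ac(X)$, the two maps on fibres agree, so the square is cartesian. The main obstacle is the bookkeeping: tracing the Beck-Chevalley transformations for $d_0$ and $d_1$ through \cref{unitident} and recognising that they produce the same span that drives \cref{hominloc}.
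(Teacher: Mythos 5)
Your proposal is correct and follows essentially the same route as the paper: both use \cref{unitident} to exhibit the unit on $1$-simplices as the localisation of the span $([0],d_1 f) \xrightarrow{d_1} ([1],f) \xleftarrow{d_0} ([0],d_0 f)$ in $\simp/X$ (with the $d_1$-leg handled by the invertible Beck--Chevalley transformation and the $d_0$-leg by a last vertex map) and then match the induced map on fibres with the equivalence of \cref{cor2}. The one refinement the paper adds, which you should incorporate, is to package the two identifications into a single map to $\Hom_{\mathrm{Pair}}(([1]\cup_{\{1\}}[1],\mathrm{bw}),(\simp/X,\mathrm{lv}))$, so that the agreement with the span driving \cref{hominloc} holds as an identification of maps of animae rather than merely on individual points $f$.
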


\begin{proof}
  By \cref{unitident} and the discussion thereafter the naturality square 
  \[\begin{tikzcd}
  X_1 \ar[r] \ar[d,"{(d_1,d_0)}"] & \nerve_1(\ac(X)) \ar[d,"{(d_1,d_0)}"] \\
  X_0 \times X_0 \ar[r] & \nerve_0(\ac(X)) \times \nerve_0(\ac(X))
  \end{tikzcd}\]
  can be described in terms of $\simp/X$ as follows: The part involving $d_1$ is directly induced by the commutative square
  \[\begin{tikzcd}
\Hom_\sAn(\Delta^1,X) \ar[r] \ar[d,"{d_1}"] & \Hom_\Cat(\simp/\Delta^1, \simp/X) \ar[r] \ar[d,"{d_1}"] & \Hom_\Cat([1],\simp/X) \ar[d,"{d_1}"] \\
  \Hom_\sAn(\Delta^0,X) \ar[r] & \Hom_\Cat(\simp/\Delta^0, \simp/X) \ar[r] & \Hom_\Cat([0],\simp/X).
  \end{tikzcd}\]
  Unwinding, the top functor is given by 
  \[x \longmapsto \left(([0],d_1(x)) \xrightarrow{d_1} ([1],x)\right).\]
  For the part involving $d_0$, we can reinterpret the lax diagram from \cref{unitident} as a functor 
  \[\Hom_\sAn(\Delta^1,X) \longrightarrow \Hom_{\Cat}(\simp/\Delta^1,\simp/X) \xrightarrow{(\kappa_{\Delta^1}\mathrm{BC}_{d_0})^*} \Hom_\Cat([1],\Fun([0],\simp/X)^{\mathrm{lv}}).\]
It is given by 
\[x \longmapsto \left(([0],d_0(x)) \xrightarrow{d_0} ([1],x)\right)\]
and by definition these two functors glue to a map (where $\mathrm{bw}$ denotes the backward arrow)
\[\Hom_\sAn(\Delta^1,X) \longrightarrow \Hom_\mathrm{Pair}(([1] \cup_{\{1\}} [1],\mathrm{bw}),(\simp/X,\mathrm{lv})),\]
 given by 
\[x \longmapsto \left(([0],d_1(x)) \xrightarrow{d_1} ([1],x) \xleftarrow{d_0} ([0],d_0(x))\right),\]
so that the diagram
  \[\begin{tikzcd}
  \Hom_\sAn(\Delta^1,X) \ar[r] \ar[d,"{(d_1,d_0)}"] & \Hom_\mathrm{Pair}(([1] \cup_{\{1\}} [1],\mathrm{bw}),(\simp/X,\mathrm{lv})) \ar[d,"{(\mathrm{ev}_{0}, \mathrm{ev}_{0'})}"] \\
  \Hom_\sAn(\Delta^0,X) \times \Hom_\sAn(\Delta^0,X) \ar[r] & \Hom_\Cat([0],\simp/X) \times \Hom_\Cat([0],\simp/X)
  \end{tikzcd}\]
  projects to the original square after postcomposing with the localisation $\simp/X \to \ac(X)$.

  But the map induced on fibres is now by construction precisely the one we have shown becomes an equivalence in \cref{cor2}.
\end{proof}

With this proposition established we can finally come to the now short:

\begin{proof}[Proof of the main theorem]
    The adjunction $\ac \dashv \nerve$ restricts to an adjunction between complete Segal animae and categories by \cref{nerve-is-CS} and by \cref{obs:nerve-conservative} the nerve is conservative. By the triangle identities it therefore suffices to show that the unit $X \rightarrow \nerve(\ac(X))$ is an equivalence for every complete Segal anima $X$.
    Indeed, the unit 
    induces equivalences on morphism animae by \cref{unit-cartesian} and is surjective on components of the $0$-simplices by \cref{cor1}. 
    Therefore it is an equivalence by \cref{equiv-between-CS}.
\end{proof}

\bibliographystyle{amsalpha}

\providecommand{\bysame}{\leavevmode\hbox to3em{\hrulefill}\thinspace}

\end{document}